%
% file:         delta.tex
% created:      pasha jun 16-20,22-23,26-30,jul 2-4 2009
% modified:     pasha mar 3 2018,jan 18 2019
% modification: - uptodate email and layout
%               - changed footnote about ancil. file
%               - fixed small math typos
%

%%%%%%%%%%%%%%%%%%%%%%%%%%%%%%%%%%%%%%%%%%%%%%%%%%%%%%%%%%%%%%%%%%%%%%%
\documentclass[12pt,a4paper,oneside]{amsart}
\usepackage[a4paper]{geometry}
\geometry{hmargin=1.8cm,top=2cm,bottom=2.5cm}

%\usepackage{mathptmx} % Times font
%\DeclareMathAlphabet{\mathcal}{OMS}{cmsy}{m}{n} % but do not change mathcal symbols
\usepackage{fouriernc} 

\usepackage{mathrsfs}

\newtheorem{theorem}{Theorem}[section]
\newtheorem*{A}{Theorem FA}
\newtheorem*{B}{Theorem FB}
\newtheorem*{C}{Theorem FC}
\newtheorem*{LL}{Theorem LL}
\newtheorem{lemma}[theorem]{Lemma}
\newtheorem{corollary}[theorem]{Corollary}
\newtheorem{proposition}[theorem]{Proposition}

\theoremstyle{remark}
\newtheorem*{remark}{Remark}

% to force enumerate with roman numbers

% to display sets { ... | ... }
\newcommand{\set}[2]{\ensuremath{\{ #1 \>|\> #2 \}}}

% to display nicely [.,.] {.,.} (.,.)

\def\form{\ensuremath{(\,\cdot\, , \cdot\,)}}

% redefine footnote to use symbols, not numbers

\hyphenation{de-ri-va-ti-ons di-men-si-o-nal non-zero se-mi-sim-ple}
%%%%%%%%%%%%%%%%%%%%%%%%%%%%%%%%%%%%%%%%%%%%%%%%%%%%%%%%%%%%%%%%%%%%%%

\begin{document}

\title{On $\delta$-derivations of Lie algebras and superalgebras}

\author{Pasha Zusmanovich}
\address{}
\email{pasha.zusmanovich@gmail.com}

%\subjclass[2000]{Primary 17B55, Secondary 17B40, 17B56}
\date{last minor revision January 18, 2019}
\thanks{J. Algebra \textbf{324} (2010), no.12, 3470--3486; arXiv:0907.2034}

\begin{abstract}
We study $\delta$-derivations -- a construction simultaneously generalizing 
derivations and centroid. First, we compute $\delta$-derivations of current Lie algebras and of modular Zassenhaus 
algebra. This enables us to provide examples of Lie algebras having 
$\frac{1}{2}$-derivations which are divisors of zero, thus answering negatively a 
question of Filippov.
Second, we note that $\delta$-derivations allow, in some circumstances,
to construct examples of non-semigroup gradings of Lie algebras, 
in addition to the recent ones discovered by Elduque.
Third, we note that utilizing the construction of the Grassmann envelope allows to obtain results 
about $\delta$-(super)derivations of Lie superalgebras from the corresponding results about Lie algebras.
In this way, we prove that prime Lie superalgebras do not possess nontrivial 
$\delta$-(super)derivations, generalizing the recent result of Kaygorodov.
\end{abstract}

\maketitle

\section*{Introduction}

Let $L$ be a Lie algebra and $\delta$ is an element of the ground field.
A linear map $D: L \to L$ is called a \textit{$\delta$-derivation} of $L$ if
\begin{equation}\label{delta-der}
D([x,y]) = \delta [D(x),y] + \delta [x,D(y)]
\end{equation}
for any $x,y\in L$.
It is clear that ordinary derivations are $1$-derivations,
the elements of the centroid of the algebra are $\frac{1}{2}$-derivations,
and the maps vanishing on the commutant of the algebra are $0$-derivations.
Thus, $\delta$-derivations appear to be a natural simultaneous generalization of the
notions of derivations and centroid.
This alone justifies their study, but there is more to that:
$(-1)$-derivations arise in the study
of some noncommutative Jordan algebras (see \cite{hopkins} and references therein), 
as well as are closely related to 
commutative $2$-cocycles (the same way as ordinary derivations are related to the second 
cohomology group; see \cite{comm2}), and $\delta$-derivations arise in description of 
(ordinary) derivations of certain current Lie algebras (\cite[\S 3]{without-unit}).
$(-1)$-derivations (under the name of \textit{antiderivations}) are mentioned also in 
the classical book of Jacobson (\cite[p. 179]{jacobson}).

$\delta$-derivations were studied by Filippov in a series of papers (\cite{filippov-1},
\cite{filippov-2} and \cite{filippov-ass}). 
In particular, he proved that prime Lie algebras, as a rule, 
do not have nontrivial (i.e., different from the above three examples for 
$\delta = 0, 1, \frac{1}{2}$) $\delta$-derivations.
He also noted that $\frac{1}{2}$-derivations of a prime Lie algebra form
a commutative ring which contains the centroid of the algebra,
and posed the question: is it true that this ring does not contain divisors of 
zero?

In the recent papers \cite{kayg-first,kayg}, Kaygorodov initiated a study of 
a similar notion for nonassociative superalgebras. 
In particular, he proved in \cite{kayg} that finite-dimensional classical Lie 
superalgebras over a field of characteristic zero do not have nontrivial 
$\delta$-derivations.

A more general concept -- the so-called \textit{quasiderivations} -- was studied
by Leger and Luks in \cite{leger-luks}. These are the linear maps $D: L\to L$ of a 
Lie algebra $L$ such that there is a linear map $F(D):L\to L$ such that 
$$
F(D)([x,y]) = [D(x),y] + [x,D(y)]
$$
holds for any $x,y\in L$. 
Clearly, $\delta$-derivations with $\delta\ne 0$ are quasiderivations with 
${F(D) = \frac{1}{\delta}D}$.
While Filippov's methods are essentially ring-theoretic and consist of sophisticated 
manipulation with identities, Leger and Luks confined themselves to the case
of finite-dimensional algebras over an algebraically closed field and used a
Lie-algebraic technique of the root space decomposition with respect to a torus.
In this way, they obtained some useful criteria for quasiderivations of a Lie
algebra to be reduced to the ordinary derivations and centroid.

We hope that the present paper reveals further interesting interconnections
and provides additional motivation to study $\delta$-derivations.
In \S 1 we give a simple
formula expressing $\delta$-derivations of current Lie algebras in terms of its 
tensor factors, similar to the known formula for the ordinary derivations.
In \S 2 we compute $\delta$-derivations of the modular Zassenhaus algebra. 
It turns out that Zassenhaus algebra has $\frac{1}{2}$-derivations
which are divisors of zero, what provides a negative answer to the Filippov's question.
Further examples can be obtained by tensoring
Zassenhaus algebra with the reduced polynomial algebra and adding a ``tail'' of derivations 
-- a construction typical in the structure theory of modular Lie algebras. 
Note that the property to possess nontrivial $\delta$-derivations distinguishes Zassenhaus 
algebra, together with $sl(2)$, among all simple finite-dimensional Lie algebras.

In \S 3 we note an elementary, albeit somewhat unexpected connection with a different topic --
non-semigroup gradings. The relevant story is quite intriguing: in the influential
paper \cite{zass}, Patera and Zassenhaus claimed that each grading
of a finite-dimensional Lie algebra is a semigroup grading. This was believed to be true
for almost two decades, until Elduque found a flaw in the proof and constructed 
counterexamples to this statement (\cite{elduque-1} and \cite{elduque-2}).
It turns out that, under certain additional conditions, $\delta$-derivations
lead to non-semigroup gradings of Lie algebras, what allows to construct
further counterexamples systematically.
We also note a restriction on $\delta$ which should satisfy a non-nilpotent 
$\delta$-derivation of a complex perfect finite-dimensional Lie algebra.

Finally, in \S 4, we show that prime Lie superalgebras do not have 
nontrivial $\delta$-derivations and $\delta$-superderivations. 
This extends the result of \cite{kayg} mentioned above.
Kaygorodov achieves this by utilizing the known classification of simple classical
Lie superalgebras and case-by-case calculations, while our approach based
on passing from superalgebras to algebras via the venerable Grassmann envelope,
and utilizing Filippov's results.

In the last section we take the liberty to suggest some directions for further investigations.

\section*{Notation and conventions}

Throughout the paper, $\delta$ denotes an element of the ground field $K$, which is assumed
to be of characteristic $\ne 2,3$ (though some of the intermediate results are valid
also in characteristic $3$).

It is obvious that for a fixed $\delta$, the set of all $\delta$-derivations of a given 
Lie algebra $L$ forms a linear space, which will be denoted by $Der_\delta(L)$.
For $\delta=1$, the space (actually, a Lie algebra) of the ordinary derivations
will retain the usual notation $Der(L)$.

In what follows, we will need also a straightforward extension of this notion
by considering $\delta$-derivations with values in modules.
Let $L$ be a Lie algebra and $M$ is an $L$-module.
A linear map $D: L \to M$ will be called a \textit{$\delta$-derivation of $L$ with values in $M$}
if
$$
D([x,y]) = \delta y \bullet D(x) - \delta x \bullet D(y) 
$$
for any $x,y\in L$.
The linear space of all such maps will be denoted as $Der_\delta (L,M)$.
The same way as for $\delta$-derivations with values in the adjoint module, 
in the case $\delta = 1$ the subscript $\delta$ will be omitted.

Other standard notions and notation we use: $Hom(V,W)$ is the space of all linear maps between
vector spaces $V$ and $W$. 
The commutant and the center of a Lie algebra $L$ are denoted as $[L,L]$ and $Z(L)$,
respectively.
A Lie algebra is called \textit{perfect} if it coincides with its commutant.
An algebra is called \textit{prime} if the product of any two its nonzero ideals
is nonzero.
The \textit{centroid} of an algebra $A$ is a set of linear maps $\chi\in Hom(A,A)$
commuting with all left and right multiplications in $A$: 
$\chi(ab) = \chi(a)b = a\chi(b)$ for any $a,b\in A$.
For an associative commutative algebra $A$, the operator of multiplication by an
element $a\in A$ is denoted as $R_a$.

For convenience, we record Filippov's results which we will cite frequently
(and generalize the first two of them to superalgebras in \S \ref{super}):

\begin{A}[\cite{filippov-2}, Theorem 2]
A prime Lie algebra does not have nonzero $\delta$-derivations if 
$\delta \ne -1, 0, \frac{1}{2}, 1$.
\end{A}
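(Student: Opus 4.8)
The plan is to derive, from the defining relation \eqref{delta-der}, a short chain of bracket identities whose scalar coefficients are polynomials in $\delta$; the point is that after the harmless factors $\delta$ and $\delta-1$ are divided out one is left with a factor $(2\delta-1)(\delta+1)$ standing in front of a bracket expression strong enough to be fatal once primeness is invoked. The four excluded values $0,1,\tfrac12,-1$ are precisely the roots of $\delta(\delta-1)(2\delta-1)(\delta+1)$.

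First I would expand $D([[x,y],z])$ in two ways. Peeling off first $z$ and then $[x,y]$ via \eqref{delta-der} produces $\delta^2[[D(x),y],z]+\delta^2[[x,D(y)],z]+\delta[[x,y],D(z)]$. On the other hand, writing $[[x,y],z]=[x,[y,z]]-[y,[x,z]]$ and applying \eqref{delta-der} to each summand yields a second expression. Converting all iterated brackets to a common normal form by the Jacobi identity and subtracting, everything cancels except the factor $\delta^2-\delta$ multiplying a single bracket expression; since $\delta\ne 0,1$ we may cancel it and obtain the auxiliary identity
\begin{equation*}
[D(x),[y,z]] \;=\; [[x,y],D(z)]-[[x,z],D(y)], \qquad x,y,z\in L. \tag{$\star$}
\end{equation*}
One checks that $(\star)$ already fails for ordinary inner derivations of $sl(2)$, which is consistent with $1$ being an excluded value.

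Second, I would iterate $(\star)$: substitute $x\mapsto[x_1,x_2]$, and separately apply $D$ to both sides of $(\star)$, in each case using \eqref{delta-der} and $(\star)$ itself to rewrite every term in the normal form built from iterated brackets of $x,y,z$ and their $D$- and $D^2$-images. Keeping track of the scalar ($\delta$-polynomial) coefficients, the aim is to reach an identity of the shape $(2\delta-1)(\delta+1)\,P=0$ for a nontrivial bracket polynomial $P$ in $x,y,z$; since $\delta\ne\tfrac12,-1$ this forces $P\equiv 0$. Unwinding $P\equiv 0$ should give a strong vanishing statement — morally that $D(x)$ centralizes the commutant, $[D(x),[L,L]]=0$, or equivalently that a bilinear expression in $D$-images brackets identically to zero.

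Finally I would feed this into primeness. From such a vanishing statement one concludes $D(L)\subseteq Z(L)$; but a nonzero prime Lie algebra has $Z(L)=0$, since a nonzero $z\in Z(L)$ spans an ideal $Kz$ with $[Kz,L]=0$ while $L$ is itself a nonzero ideal, contradicting primeness — so $D=0$. (If the identity from the second step instead has the form $A\cdot B=0$ with $A,B$ each generating an ideal, one extracts two nonzero ideals with zero product unless $D=0$.) I expect the second step to be the main obstacle: organizing the substitutions so that precisely $(2\delta-1)(\delta+1)$ is isolated, with no weaker identity slipping in, and so that the resulting $P$ is strong enough for the last step. In particular the exclusion $\delta\ne\tfrac12$ must genuinely be used here, since centroid elements (the case $\delta=\tfrac12$) are nonzero $\delta$-derivations of prime algebras, and any argument that would also annihilate them is therefore flawed.
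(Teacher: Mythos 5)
Your first step is sound: expanding $D([[x,y],z])$ in the two ways you describe and normalizing with the Jacobi identity really does leave the factor $\delta^2-\delta$ in front of $[D(x),[y,z]]+[[x,z],D(y)]-[[x,y],D(z)]$, so the identity $(\star)$ holds whenever $\delta\ne 0,1$. But from that point on the proposal is a program, not a proof. The entire content of the theorem lives in your second step, and you do not carry it out: you state as an ``aim'' that some substitution scheme will isolate a factor $(2\delta-1)(\delta+1)$ in front of a single nontrivial bracket polynomial $P$, and you yourself flag this as the main obstacle. No such computation is exhibited, and the target you guess for $P\equiv 0$ --- morally $[D(L),[L,L]]=0$ --- is exactly the kind of statement that, as you observe, must use $\delta\ne\frac12$ in an essential way (centroid elements violate it on every prime algebra); nothing in your bookkeeping indicates where that exclusion would actually enter.

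Moreover, the real argument (the paper only cites Filippov for this theorem, but its shape is visible in Lemmas \ref{ideal}, \ref{filippov} and \ref{solv}) is structured differently from what you are aiming at. For $\delta\ne 0,\frac12,1$ one first shows $s_4(L)\subseteq \mathrm{Ker}\,D$, where $s_4(L)$ is the ideal spanned by values of the standard Lie polynomial of degree $5$; since a nonzero $\delta$-derivation of a prime algebra cannot annihilate a nonzero ideal (Lemma \ref{ideal}), this forces $s_4(L)=0$, i.e. $L$ satisfies the standard identity of degree $5$. Only then does $\delta\ne-1$ enter, to show that such an algebra carrying a nonzero $\delta$-derivation is solvable, which contradicts primeness (a nonzero solvable algebra has a nonzero abelian ideal). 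So the final contradiction is structural (solvable versus prime) rather than a direct ``$D(L)$ centralizes the commutant'' identity, and the two exceptional factors $2\delta-1$ and $\delta+1$ are consumed at different stages, not as one product in front of one identity. As written, your proposal establishes only the easy identity $(\star)$ and leaves the theorem unproved.
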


\begin{B}[\cite{filippov-1}, Corollary 3]
The space of $\frac{1}{2}$-derivations of a central prime Lie algebra having a 
nondegenerate symmetric bilinear invariant form, coincides with $K$. 
\end{B}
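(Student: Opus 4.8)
My plan is to reduce the statement to a single bracket identity and then push that identity through the invariant form. Write $\Gamma(L)$ for the centroid of $L$. Since $\Gamma(L)\subseteq Der_{1/2}(L)$ and, $L$ being central, $\Gamma(L)=K$, the inclusion $K\subseteq Der_{1/2}(L)$ is immediate, so everything rests on showing that an arbitrary $\frac12$-derivation $D$ of $L$ lies in $\Gamma(L)$. I would set $\Delta(x,y):=[D(x),y]-[x,D(y)]$; a one-line computation shows $\Delta$ is symmetric, and $(\ref{delta-der})$ with $\delta=\frac12$ rewrites as $D([x,y])=[D(x),y]-\tfrac12\Delta(x,y)=[x,D(y)]+\tfrac12\Delta(x,y)$, so that $\Delta\equiv0$ is equivalent to $D$ commuting with every $\mathrm{ad}_x$, i.e.\ to $D\in\Gamma(L)$. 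Thus the whole theorem reduces to proving $\Delta\equiv0$.

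The first, purely Lie-theoretic, step is to expand $D\big([x,[y,z]]\big)$ by $(\ref{delta-der})$ in the two ways permitted by the Jacobi identity and to simplify (using $\mathrm{char}\,K\ne2$); this will give
\begin{equation}\label{triple}
[D(x),[y,z]]=[D(y),[x,z]]+[[x,y],D(z)]\qquad(x,y,z\in L),
\end{equation}
an identity valid in every Lie algebra and satisfied by all of $\Gamma(L)$. On its own, however, $(\ref{triple})$ is far too weak: already for $L=sl(2)$ the linear maps satisfying it fill a $6$-dimensional space, against $\dim\Gamma(sl(2))=1$. So the hypotheses on $L$ must be used in an essential way, and they will enter through the invariant form.

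Next I would pair $(\ref{triple})$ — and the further relations produced by iterating the expansion on longer brackets — with an arbitrary element of $L$ via $(\cdot,\cdot)$, using the invariance $([a,b],c)=(a,[b,c])$ to move brackets across the form; this turns the whole collection into identities for the single bilinear form $\beta(x,y):=(D(x),y)$, the basic one (read off directly from $(\ref{delta-der})$) being $2\beta([x,y],z)=\beta(x,[y,z])-\beta(y,[x,z])$. In these terms $\Delta\equiv0$ is precisely the assertion that $\beta$ is invariant, and this is where nondegeneracy of $(\cdot,\cdot)$ is needed — to pass from $(\Delta(x,y),z)=0$ for all $z$ to $\Delta(x,y)=0$. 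It then remains to extract, from this system of identities now combined with primeness, that $\beta$ is invariant; the route I would try is to prove $[\Delta(x,y),w]=0$ for all $x,y,w$, which, since a prime Lie algebra is centreless, gives $\Delta\equiv0$ and hence $D\in\Gamma(L)=K$.

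The hard part will be this last step. The formal identities above are demonstrably insufficient, and no argument that sidesteps the invariant form can succeed, since the Zassenhaus algebra is prime yet carries no nondegenerate invariant form and, as computed in \S 2, does possess $\frac12$-derivations outside its centroid. Manipulating the identities into a shape to which primeness applies is exactly the sort of ``sophisticated manipulation with identities'' of Filippov, and I do not anticipate a purely formal shortcut.
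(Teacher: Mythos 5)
There is a genuine gap, and you have in fact flagged it yourself: the entire content of Theorem FB is the implication ``primeness $+$ nondegenerate symmetric invariant form $\Rightarrow$ every $\frac12$-derivation lies in the centroid,'' and this is precisely the step your proposal announces but does not carry out. Everything you actually establish — the inclusion $K=\Gamma(L)\subseteq Der_{1/2}(L)$, the reduction to $\Delta\equiv 0$, the triple identity $[D(x),[y,z]]=[D(y),[x,z]]+[[x,y],D(z)]$, and the identity $2\beta([x,y],z)=\beta(x,[y,z])-\beta(y,[x,z])$ for $\beta(x,y)=(D(x),y)$ — is correct but holds in any Lie algebra with an invariant form; none of it yet engages primeness, and your own $sl(2)$ remark shows these formal consequences cannot suffice. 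A proof must produce a specific element (or bilinear expression) in $L$ that the identities force to be annihilated by $[L,L]$, or to generate an ideal with zero product against another nonzero ideal, so that primeness can kill it; without exhibiting that element the argument does not close. Declaring that the remaining manipulation is ``exactly the sort of sophisticated manipulation with identities of Filippov'' is an accurate description of what is missing, not a substitute for it.

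For what it is worth, the paper itself offers no proof to compare against: Theorem FB is quoted verbatim from \cite{filippov-1} (Corollary 3) as an imported result. The closest the paper comes to revealing the mechanism is Lemma \ref{symm}, which notes that in Filippov's proof of \cite[Theorem 6]{filippov-1} primeness enters only at the very end, to conclude that $\{x\in L \mid [[L,L],x]=0\}=0$. This tells you the shape of the missing step: the identity manipulations must be pushed far enough to show that the failure of $D$ to centralize the adjoint action is concentrated in the set $\{x\in L \mid [[L,L],x]=0\}$, after which primeness (hence centerlessness of $[L,L]$-annihilators) and nondegeneracy of the form finish the argument. Your outline is compatible with that route but stops exactly where the work begins.
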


\begin{C}[\cite{filippov-2}, Corollary 1]
A central simple Lie algebra having nonzero $(-1)$-de\-ri\-va\-ti\-ons is a form of
$sl(2)$.
\end{C}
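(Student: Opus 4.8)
The plan is to reduce the statement to an isomorphism over an algebraically closed field and then to exploit the very rigid, grading-like structure that a nonzero $(-1)$-derivation forces on $L$. First I would pass to the algebraic closure: since $L$ is central simple, $\overline{L}:=L\otimes_K\overline{K}$ is again simple, $D\otimes\operatorname{id}$ is a nonzero $(-1)$-derivation of $\overline{L}$, and $L$ is a form of $sl(2)$ precisely when $\overline{L}\cong sl(2,\overline{K})$; so it suffices to show that a simple Lie algebra $L$ over an algebraically closed field of characteristic $\ne 2,3$ carrying a nonzero $(-1)$-derivation $D$ is isomorphic to $sl(2)$.

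Next I would extract two consequences of (\ref{delta-der}) with $\delta=-1$. Read as an operator identity, the defining relation becomes
\[
D\circ\operatorname{ad}_x+\operatorname{ad}_x\circ D=-\operatorname{ad}_{D(x)},\qquad x\in L,
\]
and, applying $D$ to the Jacobi identity and simplifying, one gets (characteristic $\ne 2$)
\[
[D(x),[y,z]]+[D(y),[z,x]]+[D(z),[x,y]]=0,\qquad x,y,z\in L .
\]
It is also routine that the commutator of two $(-1)$-derivations is an ordinary derivation and that $[\operatorname{ad}_x,D]$ is again a $(-1)$-derivation, so that $Der_{-1}(L)$ is an $L$-submodule of $\operatorname{End}(L)$ which meets $\operatorname{ad}(L)$ and the scalars only in $0$ (elements of $\operatorname{ad}(L)$ are $1$-derivations, scalars are $\frac{1}{2}$-derivations, and a nonzero scalar $(-1)$-derivation is impossible in characteristic $\ne 3$).

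I would then treat first the case — the one of principal interest — in which $L$ is finite-dimensional over a field of characteristic $0$, so that it carries a nondegenerate invariant symmetric bilinear form $\kappa$ (the Killing form). The first point is that $D$ is $\kappa$-self-adjoint: from the operator identity and invariance of $\kappa$ the skew part $g(x,y):=\kappa(D(x),y)-\kappa(D(y),x)$ satisfies $g([x,y],z)=g(y,[x,z])$, so every $\operatorname{ad}_x$ is $g$-self-adjoint; if $g$ were nondegenerate, $\operatorname{ad}_{[x,y]}=[\operatorname{ad}_x,\operatorname{ad}_y]$ would be at once $g$-self-adjoint and $g$-skew, hence $0$, contradicting $[L,L]=L$, $Z(L)=0$; and as $\operatorname{rad}(g)$ is an ideal, simplicity gives $g\equiv 0$. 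Self-adjointness makes $\kappa$ nondegenerate on each generalized eigenspace $L_\lambda$ of $D$, while (\ref{delta-der}) with $\delta=-1$ gives the stability $[L_\lambda,L_\mu]\subseteq L_{-\lambda-\mu}$ — a ``grading'' of $L$ by the eigenvalue set of $D$ under the non-associative operation $\lambda*\mu=-\lambda-\mu$, exactly the non-semigroup situation of \S3; in particular the semisimple and nilpotent parts of $D$ are again $(-1)$-derivations. Since a nonzero scalar $(-1)$-derivation cannot exist, the semisimple part (if nonzero) has at least two eigenvalues, and a case analysis of the admissible eigenvalue sets — using simplicity and the nondegeneracy of $\kappa$ on the $L_\lambda$ — should force $\dim L=3$; the leftover case of nilpotent $D$ I would dispose of directly from the two displayed identities. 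As $sl(2)$ is the unique $3$-dimensional simple Lie algebra in characteristic $\ne 2,3$, this gives $L\cong sl(2)$.

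The main obstacle is twofold. The combinatorial analysis of the eigenvalue set, together with the nilpotent case, is the technical heart of the finite-dimensional argument and needs to be run with care. More seriously, the theorem carries no finiteness or characteristic hypothesis beyond $\operatorname{char}\ne 2,3$, so in general no invariant form is at hand; there one has to argue entirely with the identities above inside the multiplication algebra generated by $\operatorname{ad}(L)$, using primeness in the manner of Filippov's proof of Theorem FA, to show directly that a nonzero $(-1)$-derivation forces $\dim L=3$. That ring-theoretic reduction is the real crux.
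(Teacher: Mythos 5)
First, a point of reference: the paper does not prove this statement at all — Theorem FC is quoted verbatim from Filippov \cite[Corollary 1]{filippov-2}, and Filippov's route is purely ring-theoretic: he shows (cf.\ \cite[Lemma 3]{filippov-2}, the same lemma invoked in the proof of Lemma \ref{solv} of this paper) that a prime Lie algebra with a nonzero $\delta$-derivation, $\delta\ne 0,\frac{1}{2},1$, satisfies the standard identity of degree $5$, and then uses the fact that a central simple Lie algebra satisfying that identity is a form of $sl(2)$. The individual computations you do carry out are correct: the operator identity $D\circ \mathrm{ad}_x+\mathrm{ad}_x\circ D=-\mathrm{ad}_{D(x)}$, the cyclic identity $[D(x),[y,z]]+[D(y),[z,x]]+[D(z),[x,y]]=0$ in characteristic $\ne 2$, the vanishing of the alternating form $g$, the orthogonality of distinct generalized eigenspaces and the rule $[L_\lambda,L_\mu]\subseteq L_{-\lambda-\mu}$, and the nonexistence of nonzero scalar $(-1)$-derivations when $\mathrm{char}\,K\ne 3$ all check out.

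Nevertheless the proposal has two genuine gaps, the second of which you acknowledge. (1) Even in the finite-dimensional characteristic-zero case, the decisive step --- ``a case analysis of the admissible eigenvalue sets should force $\dim L=3$'' --- is not performed: nothing is said about which eigenvalue configurations occur, how they are excluded, or how the purely nilpotent case is actually disposed of. (2) More seriously, the theorem carries no finiteness or characteristic-zero hypothesis, and it is precisely the modular case that the paper needs in \S\ref{sect-zass}, to rule out $(-1)$-derivations of the Zassenhaus and Hamiltonian algebras; there the Killing form is degenerate or zero, so the entire Killing-form mechanism is unavailable. In the one case you do treat --- finite-dimensional simple over an algebraically closed field of characteristic $0$ --- the conclusion already follows from Theorem LL together with the rank-one classification, so the hard content of Theorem FC is exactly the part you defer as ``the real crux.'' As it stands this is a plausible program in an easy special case, not a proof of the statement.
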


Filippov's results overlap with those of Leger--Luks:

\begin{LL}[Leger--Luks]
The space of quasiderivations of a simple finite-dimensional Lie algebra of rank $>1$
coincides with the direct sum of derivations and the centroid of the algebra.
\end{LL}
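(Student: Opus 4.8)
\medskip
\noindent\textit{Proof strategy.} I will follow the Lie-theoretic route of Leger and Luks (working, as they do, over a ground field of characteristic zero): reduce to the algebraically closed case, then read everything off the root-space decomposition relative to a Cartan subalgebra. Write $QDer(L)$ for the space of quasiderivations and $C(L)$ for the centroid. One first disposes of the base field: $QDer(L)$, $Der(L)$ and $C(L)$ are solution sets of finite homogeneous linear systems with coefficients in $L$, hence commute with extension of scalars to $\overline K$, and it follows at once from the defining identity $F(D)([x,y])=[D(x),y]+[x,D(y)]$ that a quasiderivation cannot carry one simple ideal of $L\otimes_K\overline K$ into another (if $[x,y]=0$ then $[D(x),y]=-[x,D(y)]$, and primeness of the two ideals kills the off-diagonal blocks of $D$); since $L\otimes_K\overline K$ is a direct sum of central simple Lie algebras of rank $>1$, permuted transitively by the Galois group, it suffices to treat $L$ central simple over $K=\overline K$ and afterwards pass to Galois-fixed points. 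So assume $K=\overline K$, fix a Cartan subalgebra $H$ with root system $\Phi$ spanning $H^*$, $\ell:=\dim H>1$, and $L=H\oplus\bigoplus_{\alpha\in\Phi}L_\alpha$. Now grade $\mathrm{End}(L)$ by the root lattice $Q=\mathbf Z\Phi$, calling $T$ homogeneous of degree $\gamma$ when $T(L_\alpha)\subseteq L_{\alpha+\gamma}$ for all $\alpha$, with the convention $L_0:=H$ and $L_\beta:=0$ for $\beta\notin\Phi\cup\{0\}$. Since $[L,L]=L$, the map $F(D)$ is uniquely determined by $D$, and comparing homogeneous components of the defining identity shows that in $D=\sum_\gamma D_\gamma$ each $D_\gamma$ is again a quasiderivation with $F(D_\gamma)=F(D)_\gamma$. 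Hence $QDer(L)$ is $Q$-graded, as are $Der(L)=\mathrm{ad}(L)$ (with $Der(L)_0=\mathrm{ad}(H)$ and $Der(L)_\gamma=\mathrm{ad}(L_\gamma)$ for $\gamma\ne0$) and $C(L)=K\cdot\mathrm{id}$ (concentrated in degree $0$), and it is enough to compute $QDer(L)_\gamma$ for each $\gamma$.

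In degree $0$ a homogeneous quasiderivation $D$ preserves $H$ and acts on $L_\alpha$ by a scalar $\lambda_\alpha$. Writing the defining identity first with $x\in H$, $y\in L_\alpha$ and then with $x\in L_\alpha$, $y\in L_\beta$ for $\alpha+\beta\in\Phi$, and eliminating $F(D)$, one obtains $\alpha(D(h))=(\mu_\alpha-\lambda_\alpha)\,\alpha(h)$ for all $h\in H$, together with $\mu_{\alpha+\beta}=\lambda_\alpha+\lambda_\beta$, where the $\mu$'s are the corresponding scalars of $F(D)$. Connectedness of $\Phi$ — the first of the two places where the hypothesis $\ell>1$ is genuinely used, $\ell=1$ producing no such relations from the sole pair $\{\alpha,-\alpha\}$ — forces $\mu_\alpha-\lambda_\alpha$ to be a constant $t$ independent of $\alpha$; then $D|_H=t\cdot\mathrm{id}$, and $\alpha\mapsto\lambda_\alpha-t$ is additive along root strings, hence the restriction of a linear functional, so that $\lambda_\alpha=t+\alpha(h)$ for some fixed $h\in H$. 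Therefore $D=t\cdot\mathrm{id}+\mathrm{ad}(h)\in C(L)\oplus Der(L)_0$, and the reverse inclusion is clear.

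For $\gamma\ne0$ there is no centroid contribution, and $\mathrm{ad}(L_\gamma)\subseteq QDer(L)_\gamma$ has dimension $1$ if $\gamma\in\Phi$ and $0$ otherwise, so it suffices to prove $\dim QDer(L)_\gamma\le\dim L_\gamma$. Testing the defining identity with $x\in H$ against each $y\in L_\alpha$ produces, for every $\alpha$, a linear relation tying together the ``matrix entries'' of $D$ and $F(D)$ on $L_\alpha$, on $H$, and on $L_{-\gamma}$; exploiting these relations across the whole of $\Phi$ — which, for $\ell>1$, is rich enough to link all the entries to one another — forces $D$ to be a scalar multiple of $\mathrm{ad}(e_\gamma)$ (a nonzero element of $L_\gamma$) when $\gamma\in\Phi$, and $D=0$ when $\gamma\notin\Phi$, so $QDer(L)_\gamma=Der(L)_\gamma$. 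Summing over $\gamma$ now gives $QDer(L)=C(L)+Der(L)$, and the sum is direct because a scalar $t\cdot\mathrm{id}$ which is also a derivation satisfies $t[x,y]=2t[x,y]$, hence $t=0$ as $L$ is perfect.

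I expect the nonzero-degree step to be the main obstacle: one must push the linear relations coming from the quasiderivation identity through enough root strings to eliminate every spurious solution, and this is precisely what breaks down for $sl(2)$, where genuinely new quasiderivations appear. Indeed, in $sl(2)$ with $[h,e]=2e$, $[h,f]=-2f$, $[e,f]=h$, the map $D$ sending $e\mapsto f$ and $f,h\mapsto0$, paired with the map $F(D)$ sending $e\mapsto-f$ and $f,h\mapsto0$, is a quasiderivation of degree $-2\alpha$ lying in neither $Der(sl(2))$ nor $C(sl(2))$ — which is exactly why the rank-$1$ case must be excluded, in keeping with the special role of $sl(2)$ pointed out in \S 2. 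A secondary, purely bookkeeping, difficulty is making the scalar-extension and Galois-descent step of the first paragraph precise when the centroid of $L$ is a proper field extension of $K$.
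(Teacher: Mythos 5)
Your strategy is the Leger--Luks one, but note first that the paper does not reprove this theorem at all: its ``proof'' consists of observing that the statement is a direct consequence of \cite[Corollary 4.16]{leger-luks}, together with the remark that the ``special weight spaces'' hypothesis of \cite[Theorem 4.12]{leger-luks} is automatic for a simple algebra of rank $>1$. You are therefore attempting something strictly more ambitious, and the hard part is missing. Your degree-zero analysis is complete and convincing (both the eigenvector argument for the transpose of $D|_H$ on the roots and the linearity of the additive function $\alpha\mapsto\lambda_\alpha-t$ genuinely use rank $>1$ and irreducibility of $\Phi$). But for $\gamma\ne 0$ you only assert that the relations obtained from $x\in H$, $y\in L_\alpha$ are ``rich enough to link all the entries''. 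Written out, that relation reads $(\mu_\alpha-\lambda_\alpha)\alpha-\lambda_\alpha\gamma=N_{\gamma,\alpha}\,d$ as an identity of linear functionals on $H$ (where $D(h)=d(h)e_\gamma$ and $N_{\gamma,\alpha}$ is the structure constant), and to conclude one must combine it with the relations coming from pairs $x\in L_\alpha$, $y\in L_\beta$, treat separately the cases $\gamma\in\Phi$, $\gamma\in(\Phi+\Phi)\setminus\Phi$ and $\alpha+\gamma=0$, and rule out a nonzero component $L_{-\gamma}\to H$. None of this is carried out, and since this is precisely the computation that separates rank $1$ from rank $>1$ (as your own $sl(2)$ example, which is correct, shows), it cannot be waved away; as it stands the proposal proves only the degree-zero half of the theorem.

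There is a second, structural, problem: you fix the characteristic to be zero, whereas the paper invokes Theorem LL in \S 2 for modular simple Lie algebras, to conclude that among all finite-dimensional simple algebras only the rank-one ones -- $sl(2)$, the Zassenhaus algebra $W_1(n)$ and the Hamiltonian algebra $H_2(m_1,m_2)$ -- may carry nontrivial $\delta$-derivations. In positive characteristic ``rank'' means the dimension of a maximal torus, root spaces need not be one-dimensional, the weights take values in the prime field rather than forming a reduced root system, and your appeals to connectedness of the Dynkin diagram and to linearity of additive functions on $\Phi$ have no direct analogue; handling this is exactly what the Leger--Luks apparatus of tori and ``special weight spaces'' is for. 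So even after the nonzero-degree step is filled in, your argument would establish the theorem only in the classical/characteristic-zero setting, which does not cover the way it is actually used in the paper.
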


\begin{proof}
This is a direct consequence of \cite[Corollary 4.16]{leger-luks}. Note that the condition about 
``special weight spaces'' stated in \cite[Theorem 4.12]{leger-luks}, assumes that the rank 
of the algebra $>1$ (see \cite[Lemma 4.8(1)]{leger-luks}), and automatically follows from
the simplicity.
\end{proof}

\section{$\delta$-derivations of current Lie algebras}

Recall that the \textit{current Lie algebra} is a Lie algebra of the form $L\otimes A$
obtained by tensoring a Lie algebra $L$ with an associative commutative algebra
$A$, equipped with the Lie multiplication
\begin{equation}\label{curr}
[x\otimes a, y\otimes b] = [x,y]\otimes ab
\end{equation}
for $x,y\in L$, $a,b\in A$.
If $M$ is an $L$-module, and $V$ is an $A$-module, consider also the ``current module''
$M\otimes V$ with action
$$
(x \otimes a) \bullet (m \otimes v) = (x\bullet m) \otimes (a \bullet v)
$$
for $x\in L$, $a\in A$, $m\in M$, $v\in V$ (here, by abuse of notation, the same bullet 
sign denotes the respective actions of $L$, $A$ and $L\otimes A$).

\begin{theorem}\label{current}
Let $L$ be a Lie algebra, $A$ an associative commutative algebra with unit,
$M$ an $L$-module, $V$ a unital $A$-module, 
and either both $L$ and $M$, or both $A$ and $V$ are finite-dimensional.
Then there is an isomorphism of vector spaces:
\begin{align*}
Der_\delta (L &\otimes A, M \otimes V) \\ 
& \simeq Der_\delta (L,M) \otimes V \\ 
&\oplus 
\set{\varphi\in Hom(L,M)}{\varphi([x,y]) = \delta x \bullet \varphi(y) \text{ for any } x,y\in L} \otimes Der(A,V) \\ 
&\oplus Hom(L/[L,L], M^L) \otimes Hom(A,V) / (V + Der(A,V)).
\end{align*}
%Each $\delta$-derivation of the current Lie algebra 
%$L\otimes A$ is the sum of the following $\delta$-derivations:
%\begin{enumerate}
%\item 
%$x\otimes a \mapsto d(x) \otimes au$ for some $\delta$-derivation of $L$ and $u\in A$;
%\item
%$x\otimes a \mapsto d(x) \otimes \alpha(a)$ for some linear map $d: L\to L$ such that
%$d([x,y]) = \delta[d(x),y] = \delta [x,d(y)]$ for any $x,y\in L$, and $\alpha\in Der(A)$;
%\item
%$x\otimes a \mapsto d(x) \otimes \alpha(a)$ for some linear map $d: L\to Z(L)$ such that 
%$d([L,L]) = 0$.
%\end{enumerate}
\end{theorem}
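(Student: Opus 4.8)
The statement decomposes $\delta$-derivations of a current algebra into three "types," and the natural strategy is to mimic the classical computation of $Der(L\otimes A)$: write a general element of $Der_\delta(L\otimes A, M\otimes V)$ in coordinates using a basis, extract the constraints imposed by the defining identity on elements of the form $[x\otimes a, y\otimes b] = [x,y]\otimes ab$, and then organize the resulting solution space into the three summands. Concretely, I would first reduce to the case where one of the two tensor factors is finite-dimensional (this is exactly the hypothesis), say $A$ and $V$ are finite-dimensional; then $Hom(L\otimes A, M\otimes V) \simeq Hom(L,M)\otimes Hom(A,V)$ canonically, and a $\delta$-derivation $D$ can be written as $D = \sum_i \varphi_i \otimes \psi_i$ with $\{\psi_i\}$ a basis of $Hom(A,V)$ and $\varphi_i \in Hom(L,M)$. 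Plugging into the identity $D([x\otimes a,y\otimes b]) = \delta (y\otimes b)\bullet D(x\otimes a) - \delta(x\otimes a)\bullet D(y\otimes b)$ and separating variables in $A$ and $V$, I expect to get, for each $i$, a relation of the shape $\varphi_i([x,y])\otimes(\text{something in }A,V) = \delta\big(y\bullet\varphi_?(x) - x\bullet\varphi_?(y)\big)\otimes(\dots)$, where the $A$-$V$ part dictates which maps $\psi_i$ behave like the unit action (giving the $Der_\delta(L,M)\otimes V$ piece), which behave like genuine $A$-derivations into $V$ (giving the middle piece, paired with the "one-sided" $\delta$-cocycles $\varphi$), and which are "leftovers" (giving the third piece, which by the identity forces $\varphi$ to kill $[L,L]$ and to land in $M^L$).

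**Identifying the three summands.** The cleanest way to see the trichotomy is to first observe that $V = \operatorname{Hom}(A,A)\cdot(\text{eval at }1)$-type maps, $Der(A,V)$, and the quotient $Hom(A,V)/(V + Der(A,V))$ give a direct-sum decomposition of $Hom(A,V)$ as a vector space (this uses that $A$ is unital: evaluation-at-unit embeds $V\hookrightarrow Hom(A,V)$, and $Der(A,V)$ meets its image trivially since a derivation kills the unit). So I would fix a vector-space splitting $Hom(A,V) = V \oplus Der(A,V) \oplus W$ for some complement $W$, expand $D$ accordingly as $D = D_0 + D_1 + D_2$ with the three blocks valued in these three subspaces, and show that the current-algebra identity decouples into three independent conditions — one on each block. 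For $D_0$, the condition should be exactly that the $L$-component is a $\delta$-derivation $L\to M$, giving $Der_\delta(L,M)\otimes V$. For $D_1$, because an element of $Der(A,V)$ satisfies $\lambda(ab) = a\bullet\lambda(b) + b\bullet\lambda(a)$, the two terms $\delta y\bullet D(x)$ and $-\delta x\bullet D(y)$ split and only the "$\varphi([x,y]) = \delta x\bullet \varphi(y)$" half survives (the other half being absorbed), giving the middle summand. For $D_2$, the complement $W$ contributes terms that cannot be matched on the right-hand side unless $\varphi$ vanishes on $[L,L]$ and its image is $L$-invariant, giving $Hom(L/[L,L], M^L)\otimes W$.

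**Closing the argument and the main obstacle.** Once the three blocks are isolated, I would verify that each displayed summand does indeed consist of $\delta$-derivations (the "easy direction" — a direct substitution), so that the decomposition map is surjective; injectivity is clear since the three target spaces are built from a direct-sum decomposition of $Hom(A,V)$. I would also need to handle the symmetric case where $L$ and $M$ are finite-dimensional instead, but this should follow by the same computation with the roles of the factors swapped, using $Hom(L\otimes A, M\otimes V)\simeq Hom(L,M)\otimes Hom(A,V)$ again and now expanding over a basis of $Hom(L,M)$; alternatively one can note that the statement is symmetric enough that only one case needs full detail. The main obstacle, I expect, is the bookkeeping in the middle step: showing that the current-algebra identity really does decouple cleanly along the splitting $Hom(A,V) = V\oplus Der(A,V)\oplus W$ — in particular, that cross-terms between blocks are forced to vanish. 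This requires carefully exploiting the commutativity and associativity of $A$, the unitality of $V$, and a separation-of-variables argument (choosing $a, b$ to run over a basis of $A$ and matching coefficients), and it is where the characteristic assumption and the "$\delta$ vs. $2\delta$" distinctions (e.g. the coefficient $\delta$ on each of the two terms versus $2\delta$ when $x=y$ or $a=b$) have to be tracked with care. The finiteness hypothesis is used precisely to guarantee $Hom(L\otimes A, M\otimes V) = Hom(L,M)\otimes Hom(A,V)$, without which the coordinate argument breaks down.
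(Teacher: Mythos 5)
Your proposal follows essentially the same route as the paper, which proves the theorem by repeating verbatim the separation-of-variables argument of \cite[Theorem 2.1]{low}: use the finiteness hypothesis to identify $Hom(L\otimes A, M\otimes V)$ with $Hom(L,M)\otimes Hom(A,V)$, expand $D$ over a basis of one factor, substitute the unit of $A$, and sort the resulting components into the multiplication-type, derivation-type, and ``degenerate'' pieces (your observation that the leftover block forces $\varphi([L,L])=0$ and $\varphi(L)\subseteq M^L$ is exactly the correction the paper's footnote makes to the original argument). The outline is correct and no essential idea is missing.
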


\begin{proof}
Verbatim repetition of the proof of 
\cite[Theorem 2.1]{low}\footnote[2]{
There is one minor inaccuracy in that proof, which does not affect the general flow
of reasoning and the final answer: namely, after substituting $b=1$ in the cocycle equation 
(2.2) (page 76 in the published version, lines 14-13 from the bottom), the conclusion about vanishing of 
$\varphi_i$'s with $i\in I_1$ and under assumption of the vanishing of the corresponding 
first tensor factor $x\bullet \varphi_i(y) - \varphi_i([x,y])$, is wrong. The correct 
conclusion is that corresponding $\varphi_i$'s are just ``very degenerate'':
$\varphi_i([L,L]) = 0$ and $\varphi_i(L) \subseteq M^L$, what gives rise to cocycles
of type (iii) in the statement of the theorem.
This is rectified in the arXiv version of that paper.
}.
\end{proof}

When taking $M=L$ and $V=A$, this generalizes the corresponding statement for ordinary 
derivations in \cite[Corollary 2.2]{low}, and numerous results about the centroid of 
some particular current Lie algebras scattered over the literature 
(\cite[Corollary 2.23]{benkart-neher}, 
\cite[Remark 2.19(1)]{gundogan}, \cite[Lemma 5.1]{krylyuk}, \cite[\S 3]{melville}, etc.).

\begin{corollary}\label{corr}
Let $L$ be a Lie algebra which is either perfect or centerless, 
$A$ is a commutative algebra with unit,
one of $L$, $A$ is finite-dimensional, and $\delta \ne 0,1$. Then
$Der_\delta (L\otimes A) \simeq Der_\delta(L) \otimes A$.
\end{corollary}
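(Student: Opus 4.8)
The plan is to derive Corollary \ref{corr} directly from Theorem \ref{current} by specializing $M=L$ and $V=A$ and then showing that the second and third summands in the isomorphism vanish under the stated hypotheses. Since $A$ is unital, $V=A$ is automatically a unital $A$-module, so Theorem \ref{current} applies as soon as one of $L$, $A$ is finite-dimensional, giving
\begin{equation*}
Der_\delta(L\otimes A) \simeq Der_\delta(L)\otimes A \;\oplus\; S_\delta(L)\otimes Der(A) \;\oplus\; Hom(L/[L,L],Z(L))\otimes \big(Hom(A,A)/(A+Der(A))\big),
\end{equation*}
where I abbreviate $S_\delta(L) = \{\varphi\in Hom(L,L) : \varphi([x,y]) = \delta\, x\bullet\varphi(y)\ \text{for all}\ x,y\}$ and use that $M^L = L^L = Z(L)$ for the adjoint module. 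So everything reduces to two local claims: (a) $S_\delta(L) = 0$ when $\delta\neq 0,1$ and $L$ is perfect or centerless; (b) $Hom(L/[L,L],Z(L)) = 0$ when $L$ is perfect or centerless. Claim (b) is immediate: if $L$ is perfect then $L/[L,L]=0$, and if $L$ is centerless then $Z(L)=0$.

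For claim (a), the key observation is the antisymmetry trick. The defining relation $\varphi([x,y]) = \delta\, x\bullet\varphi(y)$ is written only in $y$; swapping $x$ and $y$ and using $[y,x]=-[x,y]$ gives $\varphi([x,y]) = -\delta\, y\bullet\varphi(x)$, i.e. $\varphi([x,y]) = -\delta\,[\varphi(x),y]$. Adding the two expressions yields $2\varphi([x,y]) = \delta\big([x,\varphi(y)] - [\varphi(x),y]\big)$, and in particular subtracting shows $[x,\varphi(y)] = -[\varphi(x),y] = [y,\varphi(x)]$ up to sign bookkeeping; more usefully, from $\varphi([x,y]) = \delta\,[x,\varphi(y)] = -\delta\,[\varphi(x),y]$ one gets $[x,\varphi(y)] + [\varphi(x),y] = 0$ for all $x,y$. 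Feeding this back: $\varphi([x,y]) = \delta[x,\varphi(y)]$, and applying $\varphi$ to $[x,[y,z]]$ and expanding via the relation in two ways (peeling off either the outer or the inner bracket) produces, after using the Jacobi identity, a relation of the shape $\delta^2 = \delta$ multiplying a generically nonzero bracket expression, which forces $\varphi$ to vanish on $[L,L]$ once $\delta\neq 0,1$. Then $\varphi([x,y]) = 0 = \delta\,[x,\varphi(y)]$ gives $[x,\varphi(y)] = 0$ for all $x$ (as $\delta\neq0$), i.e. $\varphi(L)\subseteq Z(L)$; combined with $\varphi([L,L])=0$ this makes $\varphi$ factor through $Hom(L/[L,L],Z(L))$, which is $0$ in either case by the argument for (b). Hence $S_\delta(L)=0$.

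The step I expect to be the main obstacle is the precise Jacobi-identity manipulation in claim (a) that extracts the factor $\delta(\delta-1)$ and thereby kills $\varphi|_{[L,L]}$ — one has to be a little careful about which bracket gets peeled first and to use the already-derived symmetry $[x,\varphi(y)]=-[\varphi(x),y]$ to collapse terms, and one should double-check that no characteristic-$2$ or $3$ phenomena intrude (the paper's standing assumption $\operatorname{char} K\neq 2,3$ should suffice, the $2$ entering through the antisymmetrization). Everything else — the application of Theorem \ref{current}, the identification $L^L = Z(L)$, and the vanishing of $Hom(L/[L,L],Z(L))$ — is routine. An alternative, perhaps cleaner, route for (a): observe that a nonzero $\varphi\in S_\delta(L)$ together with the symmetry relation is essentially the data of a $\delta$-derivation plus a constraint, so one could instead invoke the structure already established for $Der_\delta$; but the direct computation above is self-contained and short, so I would go with that.
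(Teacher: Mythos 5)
Your route is exactly the paper's: specialize Theorem \ref{current} to $M=L$, $V=A$, note the third summand is $Hom(L/[L,L],Z(L))\otimes(\cdots)=0$ under either hypothesis, and kill the tensor factor of the second summand by a Jacobi-identity computation exploiting $\delta^2\ne\delta$. Two details in your sketch of that computation need repair before it can be carried out. First, the antisymmetrization has a sign slip: with the condition written as $\varphi([x,y])=\delta[x,\varphi(y)]$, swapping $x$ and $y$ gives the companion relation $\varphi([x,y])=+\delta[\varphi(x),y]$, i.e.\ $[x,\varphi(y)]=[\varphi(x),y]$, not $[x,\varphi(y)]+[\varphi(x),y]=0$ as you wrote; the Jacobi manipulation is sign-sensitive, so this matters. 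Second, the computation does not directly force $\varphi([L,L])=0$: expanding $\varphi([[x,y],z])=-\varphi([[z,x],y])-\varphi([[y,z],x])$, applying the two forms of the relation twice, and reassembling by Jacobi yields $\varphi([[x,y],z])=\delta^2[[x,y],\varphi(z)]=\delta\,\varphi([[x,y],z])$, whence $\varphi$ vanishes on $[[L,L],L]$ (and $[[L,L],\varphi(L)]=0$). For perfect $L$ this already gives $\varphi=0$ since $[[L,L],L]=L$; for centerless $L$ one uses instead $0=\varphi([[x,y],z])=\delta^2[[\varphi(x),y],z]$ to get $[\varphi(L),L]\subseteq Z(L)=0$ and then $\varphi(L)\subseteq Z(L)=0$. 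With these corrections your argument coincides with the paper's; the surrounding steps (applicability of Theorem \ref{current}, the identification $M^L=Z(L)$, and the vanishing of the third summand) are fine as you have them.
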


\begin{proof}
By assumption, the third direct summand in the isomorphism
of Theorem \ref{current} vanishes. Let us consider elements from the tensor factor 
of the second direct summand, i.e., linear maps $\varphi:L \to L$ satisfying the 
condition $\varphi([x,y]) = \delta [x,\varphi(y)]$ (and hence, by anti-commutativity,
also $\varphi([x,y]) = \delta [\varphi(x),y]$) for any $x,y\in L$.
For such maps we have, for any $x,y,z\in L$:
\begin{align*}
\varphi([[x,y],z]) &=
- \varphi([[z,x],y]) - \varphi([[y,z],x]) \\ &=
- \delta [\varphi([z,x]),y] - \delta [\varphi([y,z]),x] \\ &=
- \delta^2 [[\varphi(z),x],y] - \delta^2 [[y,\varphi(z)],x] \\ &=
\delta^2 [[x,y],\varphi(z)] \\ &= 
\delta \varphi([[x,y],z])
\end{align*}
(first and fourth equalities hold by the Jacobi identity).
As $\delta \ne 1$, this implies $\varphi([[L,L],L]) = 0$.
Hence if $L$ is perfect, all such $\varphi$'s vanish.
If $L$ is centerless, the equality
$$
0 = \varphi([[x,y],z]) = \delta [\varphi([x,y]),z] = \delta^2 [[\varphi(x),y],z]
$$
for any $x,y,z\in L$ implies $[\varphi(L),L] \subseteq Z(L) = 0$, and hence
$\varphi(L) \subseteq Z(L) = 0$. Thus the second direct summand in the 
isomorphism of Theorem \ref{current} vanishes too, and we are left with the first one.
\end{proof}

\section{$\frac{1}{2}$-derivations of the Zassenhaus algebra}\label{sect-zass}

Naturally, when one encounters a new Lie-algebraic invariant, one of the first 
examples one wishes to compute this invariant for, is finite-dimensional simple 
algebras. By Theorem LL, nontrivial $\delta$-derivations are possible only for algebras of rank $1$,
i.e., for $sl(2)$ and, in the case of the ground field of positive characteristic, for 
the Zassenhaus algebra $W_1(n)$ and the Hamiltonian algebra $H_2(m_1,m_2)$.
On the other hand, by Theorem FA, the only cases we may encounter are 
$\delta = -1, \frac{1}{2}, 0, 1$. The case $\delta = 1$ corresponds to ordinary
derivations which were studied exhaustively. The case $\delta = 0$ is not interesting at 
all: $0$-derivations are merely the maps vanishing on the commutant of an algebra 
(and thus, vanishing on the whole algebra in the simple case). 
By Theorem FC, the only central simple 
Lie algebras having nonzero $(-1)$-derivations are forms of
$sl(2)$. What happens in the remaining case $\delta = \frac{1}{2}$?

According to \cite[Theorem 5]{filippov-2}, the space of 
$\frac{1}{2}$-derivations of a prime Lie algebra $L$ forms an associative 
commutative algebra which contains the centroid of $L$. Moreover, by Theorem FB, for all simple Lie algebras having a nonzero symmetric
bilinear invariant form -- in particular, for all classical Lie algebras, as well as
for all Hamiltonian Lie algebras (see, for example,
\cite[\S 4.6, Theorem 6.5]{sf}) --
this commutative algebra coincides with the ground field (interpreted as operations
of multiplication by the field element).

Therefore, it remains only to consider $\frac{1}{2}$-derivations of the 
Zassenhaus algebra.

When computing outer derivations (and, more generally, cohomology) of a Lie 
algebra, we are greatly helped with the triviality of the Lie algebra 
(co)homology under the torus action -- a very simple, yet one of the most useful facts about (co)homology of Lie algebras.
Thus, when a Lie algebra in question possesses, for example, a grading 
compatible with the action of some torus, all outer derivations could be assumed to preserve 
that grading.

This nicety is, generally, no longer true in the case of 
$\delta$-de\-ri\-va\-tions. Moreover, $\delta$-derivations for $\delta \ne 1$ do not form a cohomology group: there are no notions of inner and outer $\delta$-derivations,
and we cannot take a quotient by an appropriate space of $1$-coboundaries.
Nevertheless, we can still benefit from considering the torus action, as was observed,
for the case of quasiderivations, by Leger and Luks in \cite[\S 4]{leger-luks}. 
Indeed, it is easy to check 
(as noted, for example, in \cite[\S 1]{filippov-2}) that for $\delta, \delta^\prime\in K$, 
$$
[Der_\delta(L), Der_{\delta^\prime}(L)] \subseteq Der_{\delta\delta^\prime}(L) ,
$$
where both $Der_\delta(L)$, $Der_{\delta^\prime}(L)$ are understood here as subspaces
of the Lie algebra \linebreak${Hom(L,L) \simeq gl(L)}$ of all linear maps of the Lie algebra $L$.
In particular, $Der_\delta(L)$ is invariant under inner derivations of $L$.
Let $T$ be a torus in $L$ such that $L$ decomposes into the direct sum of the root spaces
with respect to the action of $T$.
This action induces, in the standard way, the action of $T$ on 
$Hom(L,L) \simeq L \otimes L^*$, and the latter, as well as any its $T$-invariant 
subspace, can be decomposed into the direct sum of the root spaces with respect
to this action as well.

Let $G$ be an additively written abelian group, and $R$ a subset of $G$ which contains
$0$ and such that the sum of any two distinct elements of $R$ lies in $R$. By the 
\textit{Lie algebra of Witt type} $W_R$ we will understand
a Lie algebra having the basis $\set{e_\alpha}{\alpha\in R}$ with multiplication
\begin{equation*}
[e_\alpha, e_\beta] = (\beta - \alpha) e_{\alpha + \beta} ,
\end{equation*}
where $\alpha, \beta\in R$.

Specializing to $R = G = \mathbb Z_{p^n}$, for a prime $p$ and $n\in \mathbb N$,
we get the famous \textit{Zassenhaus algebra}
$W_1(n)$ (which is a $p^n$-dimensional algebra defined over a field of positive
characteristic $p$). The case $n=1$ deserves the special name of \textit{Witt algebra}. 
Specializing to $G = \mathbb Z$, and to $R = \mathbb Z$ and 
$R = \set{i\in \mathbb Z}{i \ge -1}$, we get 
the \textit{two-sided} and \textit{one-sided} (infinite-dimensional) 
\textit{Witt algebras}, respectively.

\begin{theorem}\label{zass}
For a Lie algebra of Witt type $W_R$, there is isomorphism of commutative algebras: 
$$
Der_{\frac{1}{2}} (W_R) \simeq K[\textstyle{\frac{1}{2}}R] ,
$$
where $\frac{1}{2}R = \set{\gamma\in R}{2\gamma\in R}$.
\end{theorem}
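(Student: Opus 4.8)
The plan is to set up the root space decomposition of $W_R$ with respect to the maximal torus $T = Ke_0$ and use it to reduce the computation of $Der_{1/2}(W_R)$ to a question about graded maps, exactly as suggested by the discussion preceding the theorem. First, I observe that $[e_0, e_\alpha] = \alpha e_\alpha$, so $W_R = \bigoplus_{\alpha \in R} Ke_\alpha$ is the root space decomposition, with $Ke_0$ playing the role of the torus; this induces a $G$-grading on $Hom(W_R, W_R) \simeq W_R \otimes W_R^*$. Since $[Der_\delta(W_R), Der_1(W_R)] \subseteq Der_\delta(W_R)$, the space $Der_{1/2}(W_R)$ is invariant under $\mathrm{ad}\, e_0$, so it decomposes into weight components, and it suffices to determine the homogeneous $\tfrac12$-derivations of each weight $\mu \in G$. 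A homogeneous map $D$ of weight $\mu$ sends $e_\alpha \mapsto c_\alpha e_{\alpha + \mu}$ (with $c_\alpha = 0$ whenever $\alpha + \mu \notin R$), and the defining equation \eqref{delta-der} with $\delta = \tfrac12$ becomes a recursion for the scalars $c_\alpha$.

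The key computation is then to solve this recursion. Writing out $D([e_\alpha, e_\beta]) = \tfrac12[D(e_\alpha), e_\beta] + \tfrac12[e_\alpha, D(e_\beta)]$ and using the multiplication $[e_\alpha, e_\beta] = (\beta - \alpha)e_{\alpha+\beta}$ gives, whenever all relevant indices lie in $R$,
\begin{equation*}
(\beta - \alpha)\,c_{\alpha + \beta} = \tfrac12(\beta - \alpha - \mu)\,c_\alpha + \tfrac12(\beta - \alpha + \mu)\,c_\beta .
\end{equation*}
Specializing $\alpha = \beta$ forces $(\mu)\,c_\alpha = (\mu)\,c_\alpha$ — no information — so instead I specialize $\beta = 0$: this yields $-\alpha\, c_\alpha = \tfrac12(-\alpha - \mu)c_\alpha + \tfrac12(-\alpha + \mu)c_0$, i.e. $(\mu - \alpha)c_\alpha = (\mu - \alpha) c_0$ after collecting terms, so $c_\alpha = c_0$ for every $\alpha$ with $\alpha \neq \mu$. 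Feeding this back into the general relation then pins down the exceptional value $c_\mu$ and, crucially, imposes a constraint on $\mu$ itself: one finds that a nonzero homogeneous $\tfrac12$-derivation of weight $\mu$ exists precisely when $\mu \in R$ and $\tfrac{\mu}{2}$ (the element whose double is... more precisely, the condition is that $\mu$ can be halved within $R$, i.e. $2\gamma = \mu$ has a solution $\gamma \in R$), and in that case the derivation is, up to scalar, $e_\alpha \mapsto e_{\alpha + \mu}$ for $\alpha \neq \gamma$ together with a prescribed coefficient at $\alpha = \gamma$. This identifies the weights carrying nontrivial homogeneous $\tfrac12$-derivations with $\tfrac12 R = \set{\gamma \in R}{2\gamma \in R}$ (re-indexed by $\gamma$ rather than $\mu = 2\gamma$), giving the underlying vector space isomorphism $Der_{1/2}(W_R) \simeq K[\tfrac12 R]$.

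Finally I must check that this vector space isomorphism is an isomorphism of commutative algebras, where the product on $Der_{1/2}(W_R)$ is composition (recall $[Der_{1/2}(W_R), Der_{1/2}(W_R)] \subseteq Der_{1/4}(W_R)$, but the \emph{associative} product — composition — lands back in $Der_{1/2}$ by Filippov's \cite[Theorem 5]{filippov-2}, or can be verified directly here) and the product on $K[\tfrac12 R]$ is the group-algebra multiplication coming from addition in $G$. Denoting by $D_\gamma$ the basis $\tfrac12$-derivation attached to $\gamma \in \tfrac12 R$ (weight $2\gamma$), one computes $D_\gamma \circ D_{\gamma'}$ on a basis vector $e_\alpha$: it is a homogeneous map of weight $2\gamma + 2\gamma' = 2(\gamma + \gamma')$, hence proportional to $D_{\gamma + \gamma'}$ provided $\gamma + \gamma' \in \tfrac12 R$, and one checks the proportionality constant is $1$ by evaluating at a single generic $\alpha$; if $\gamma + \gamma' \notin \tfrac12 R$ the composition vanishes. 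This matches $t^\gamma \cdot t^{\gamma'} = t^{\gamma + \gamma'}$ in $K[\tfrac12 R]$ exactly. The main obstacle is the bookkeeping in the recursion — correctly handling the boundary cases where some index $\alpha + \mu$ or $\alpha + \beta$ falls outside $R$ (these are vacuous constraints and must not be allowed to over-determine the $c_\alpha$), and in positive characteristic making sure the coefficients $\beta - \alpha \pm \mu$ that one divides by are genuinely nonzero in $K$ — so the argument needs the hypothesis $\mathrm{char}\,K \neq 2$ and a careful treatment of the finitely many residues where a coefficient vanishes modulo $p$.
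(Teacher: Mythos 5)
Your setup (root space decomposition with respect to $\mathrm{ad}\,e_0$, reduction to homogeneous maps $e_\alpha \mapsto c_\alpha e_{\alpha+\mu}$, and the recursion for the $c_\alpha$) is exactly the paper's, and your deduction from the specialization $\beta=0$ that $c_\alpha = c_0$ for all $\alpha \ne \mu$ is correct. The gap is in the very next step: the constraint you extract on $\mu$ is wrong. You assert that a nonzero homogeneous $\frac{1}{2}$-derivation of weight $\mu$ exists precisely when $\mu \in R$ and $\mu = 2\gamma$ for some $\gamma \in R$, and accordingly declare $D_\gamma$ to have weight $2\gamma$. The correct condition is $\mu \in R$ \emph{and} $2\mu \in R$ --- which is what the definition $\frac{1}{2}R = \set{\gamma\in R}{2\gamma\in R}$ encodes, with $D_\gamma$ shifting by $\gamma$, not by $2\gamma$. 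Indeed, the exceptional index in your own relation ``$c_\alpha = c_0$ for $\alpha \ne \mu$'' is $\alpha = \mu$, where $D(e_\mu) = c_\mu e_{2\mu}$; so the relevant question is whether $\mu + \mu$ lies in $R$, not whether $\mu$ can be halved. When $2\mu \notin R$ the forced vanishing $D(e_\mu)=0$ propagates (substitute $\beta = \mu$ into the degenerate case of the recursion) to give $c_0 = 0$ and $D=0$; when $2\mu \in R$ a further substitution (the paper uses $\alpha = \mu$, $\beta = 3\mu$, which is precisely where $\mathrm{char}\,K \ne 3$ enters) yields $c_\mu = c_0$. Your condition is genuinely different and false: on the two-sided Witt algebra $W_{\mathbb Z}$ the map $e_\alpha \mapsto e_{\alpha+1}$ is a $\frac{1}{2}$-derivation of weight $1$ (check: $\frac{1}{2}(\beta-\alpha-1) + \frac{1}{2}(\beta-\alpha+1) = \beta - \alpha$), yet $1$ is not of the form $2\gamma$ with $\gamma \in \mathbb Z$; your version would miss every odd-weight $\frac{1}{2}$-derivation of $W_{\mathbb Z}$ and of $W_1^+$.

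Two smaller points. The case analysis you defer to ``bookkeeping'' is where most of the remaining content lies: the case $\mu \notin R$ (which forces $D=0$ via $\beta = 0$ in the boundary case), the case $2\mu \notin R$, and the degenerate algebra with $R = \{0,\mu\}$ each need a separate short argument, and none of these boundary constraints is vacuous --- they are exactly what cuts the answer down from $R$ to $\frac{1}{2}R$. Also, your worry about dividing by $\beta - \alpha \pm \mu$ is misplaced: no such division is ever required; one only divides by $\beta - \alpha$ for distinct roots, and the sole characteristic restriction beyond $\ne 2$ is the single substitution noted above.
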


\begin{remark}
It is easy to see that the set $\frac{1}{2}R$ forms a semigroup, so $K[\frac{1}{2}R]$ 
is a semigroup algebra.
Of course, when $R=G$, then $\frac{1}{2}R = G$ and it is a group algebra.
\end{remark}

\begin{proof}
$W_R$ decomposes into the direct sum of one-dimensional root spaces $Ke_\alpha$, 
$\alpha\in R$, with respect to the action of the semisimple element $e_0$.
By considerations above, the space of $\frac{1}{2}$-derivations of $W_R$
is decomposed as the direct sum of one-dimensional root spaces with respect to 
the induced $e_0$-action. Any root of the latter action is the sum of two roots from $R$.
If an $\frac{1}{2}$-derivation $D$ of $W_R$ lies in the root space with the root 
$\gamma\in R + R$, then
$$
[e_0,D(e_\alpha)] - D([e_0,e_\alpha]) = \gamma D(e_\alpha) ,
$$
for any $\alpha\in R$, what implies that $D(e_\alpha)$ lies in the root space with the 
root $\alpha + \gamma$, i.e., belongs to $Ke_{\alpha + \gamma}$ if $\alpha + \gamma\in R$,
and vanishes otherwise. Assuming 
$$
D(e_\alpha) = \begin{cases}
\lambda_\alpha e_{\alpha + \gamma} , & \alpha + \gamma \in R      \\
0 ,                                  & \alpha + \gamma \notin R ,
\end{cases}
$$
for some $\lambda_\alpha\in K$,
the condition that $D$ is an $\frac{1}{2}$-derivation, written for the generic basis pair 
$e_\alpha, e_\beta$, is equivalent to:
\begin{equation}\label{lambda}
\lambda_{\alpha + \beta} = \begin{cases}
  \frac{\lambda_\alpha}{2} (1 - \frac{\gamma}{\beta - \alpha})
+ \frac{\lambda_\beta}{2}  (1 + \frac{\gamma}{\beta - \alpha}) ,
& \alpha + \gamma, \beta + \gamma \in R   \\
\frac{\lambda_\alpha}{2} (1 - \frac{\gamma}{\beta - \alpha})   ,   
& \alpha + \gamma \in R, \> \beta + \gamma \notin R  \\
0 ,
& \alpha + \gamma, \beta + \gamma \notin R .
\end{cases}
\end{equation}
for any $\alpha,\beta \in R$ such that $\alpha \ne \beta$, 
$\alpha + \beta + \gamma \in R$

Suppose $\gamma\notin R$. Substituting into the second case of (\ref{lambda}) $\beta = 0$, 
we get $\lambda_\alpha = 0$ for any $\alpha\in R$ such that $\alpha + \gamma \in R$.
Hence $D = 0$.

Suppose $\gamma\in R$. Substituting into the first case of (\ref{lambda}) $\alpha = 0$, 
we get 
\begin{equation}\label{l0}
\lambda_\beta = \lambda_0
\end{equation}
for any $\beta\in R$ such that $\beta \ne \gamma$.
If $\gamma = 0$, this implies that $D$ is proportional to the identity map.
Let $2\gamma\notin R$. Note that if $R$ consists of only two roots $0$ and $\gamma$,
then $W_R$ is isomorphic to the two-dimensional nonabelian Lie algebra. 
For this algebra, the statement of the Theorem is verified by elementary computations, 
so we can exclude this possibility from consideration.
Thus, substituting into the second case of (\ref{lambda}) any $\alpha \ne 0, \gamma$ and
$\beta = \gamma$ and taking into account (\ref{l0}), we get $\lambda_0 = 0$ and $D=0$.

If $\gamma\ne 0$ and $2\gamma\in R$, then substituting into the first case of 
(\ref{lambda}) $\alpha = \gamma$ and $\beta = 3\gamma$ 
(at this point, the assumption that the characteristic of the 
ground field $\ne 2,3$ is essential) and taking into account (\ref{l0}), yields
$\lambda_\gamma = \lambda_0$, and $D = \lambda_0 D_\gamma$, where
$D_\gamma(e_\alpha) = e_{\alpha + \gamma}$ for each $\alpha\in R$.

We have proved that each $\frac{1}{2}$-derivation of $W_R$ is a linear combination of
$D_\gamma$'s.
On the other hand, it is easy to see that for each $\gamma\in R$ such that 
$2\gamma\in R$, $D_\gamma$ is an $\frac{1}{2}$-derivation of $W_R$, and they are
linearly independent for different $\gamma$'s.
\end{proof}

Passing in Theorem \ref{zass} to the different specializations of $R$, as 
specified above, we get:
\begin{gather}
Der_{\frac{1}{2}} (W_1(n)) \simeq K[x_1, \dots, x_n]/(x_1^p, \dots, x_n^p) \label{zass1} 
\\
Der_{\frac{1}{2}} (W_1)    \simeq K[t,t^{-1}]   \notag \\
Der_{\frac{1}{2}} (W_1^+)  \simeq K[t] .        \notag
\end{gather}

All these three isomorphisms could be considered also as a manifestation of 
another general observation. Let $A$ be a commutative associative algebra and $\partial$ 
its derivation. Then we may consider a Lie algebra, denoted as $A\partial$, 
of derivations of $A$ of the form $a\partial$, $a\in A$. The Lie bracket is determined
by the formula 
$$
[a\partial, b\partial] = \big(a\partial(b) - b\partial(a)\big)\partial
$$
for $a,b\in A$.

\begin{proposition}\label{prop}
$Der_{\frac{1}{2}} (A\partial)$ contains a subalgebra isomorphic to $A$.
\end{proposition}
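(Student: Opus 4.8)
The plan is to realize the copy of $A$ inside $Der_{\frac{1}{2}}(A\partial)$ concretely, as the algebra of ``coefficient multiplications''. For $a\in A$, let $D_a\colon A\partial\to A\partial$ be the linear map $D_a(b\partial)=(ab)\partial$, $b\in A$; it is well defined, since $(b-b')\partial=0$ (as a derivation of $A$) implies $a(b-b')\,\partial(c)=0$ for all $c$, i.e. $(a(b-b'))\partial=0$, and in any event $A\partial$ is here the abstract Lie algebra carried by the vector space $A$ with bracket $[b\partial,c\partial]=(b\,\partial(c)-c\,\partial(b))\partial$. I would then establish two things: (i) each $D_a$ is a $\frac{1}{2}$-derivation of $A\partial$; and (ii) $a\mapsto D_a$ is an injective homomorphism of associative algebras. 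The image is then an associative commutative subalgebra of $Der_{\frac{1}{2}}(A\partial)$ isomorphic to $A$, which is the claim.

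For (i) one just evaluates both sides of (\ref{delta-der}) with $\delta=\frac{1}{2}$ on a generic pair $b\partial,c\partial$. The left-hand side is $D_a([b\partial,c\partial])=(ab\,\partial(c)-ac\,\partial(b))\partial$, whereas the right-hand side comes out as $\frac{1}{2}\bigl(ab\,\partial(c)-c\,\partial(ab)+b\,\partial(ac)-ac\,\partial(b)\bigr)\partial$; expanding $\partial(ab)$ and $\partial(ac)$ by the Leibniz rule, the two cross-terms containing $\partial(a)$ enter with opposite signs and are equal by commutativity of $A$, so they cancel, while the remaining terms double and the prefactor $\frac{1}{2}$ restores equality with the left-hand side. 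This cancellation of the symmetric cross-terms is the only point demanding any care, and it is exactly what makes the value $\delta=\frac{1}{2}$ (rather than $\delta=1$) work; the computation itself is a couple of lines.

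For (ii), linearity of $a\mapsto D_a$ is clear, and $D_aD_b(c\partial)=(abc)\partial=D_{ab}(c\partial)$, so $D_aD_b=D_{ab}$; hence $\{D_a : a\in A\}$ is closed under addition, scalar multiplication and composition, i.e. it is an associative commutative subalgebra of $Hom(A\partial,A\partial)$ contained in $Der_{\frac{1}{2}}(A\partial)$ — note that we need not invoke \cite[Theorem 5]{filippov-2} (that all of $Der_{\frac{1}{2}}$ is an algebra), since this particular subset is visibly closed under composition. Injectivity: if $D_a=0$ then $(ab)\partial=0$ for every $b\in A$, and taking $b$ to be the unit of $A$ gives $a\partial=0$, hence $a=0$. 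Thus $a\mapsto D_a$ is an isomorphism of $A$ onto a subalgebra of $Der_{\frac{1}{2}}(A\partial)$. I do not foresee a real obstacle here — the argument is short, with the bookkeeping in (i) the only thing needing attention.
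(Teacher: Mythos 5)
Your proof is correct and follows exactly the paper's approach: the paper also defines $D_u(a\partial)=ua\partial$ and asserts that a straightforward computation shows it is a $\frac{1}{2}$-derivation. In fact your write-up is more complete than the paper's one-line proof, since you also verify $D_aD_b=D_{ab}$ and injectivity (via the unit of $A$), which the paper leaves implicit.
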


\begin{proof}
To any $u\in A$ we may associate a linear map $D_u: A\partial \to A\partial$ by
the rule $D_u(a\partial) = ua\partial$. A straightforward computation shows that $D_u$ is an
$\frac{1}{2}$-derivation of $A\partial$ for any $u\in A$.
\end{proof}

The $3$-dimensional algebra $sl(2)$ and the Zassenhaus algebra are characterized
among simple finite-dimensional Lie algebras in various interesting ways: 
these are algebras having a subalgebra of codimension $1$, 
algebras having a maximal solvable subalgebra, 
algebras with given properties of the lattice of subalgebras, etc.
Isomorphism (\ref{zass1}), together with the results of Filippov and Leger--Luks
cited above, adds another characterization to this list: these are the only 
finite-dimensional simple Lie algebras having nontrivial $\delta$-derivations
(for $\delta = -1$ in the case of $sl(2)$ and for $\delta = \frac{1}{2}$ in the case of 
the Zassenhaus algebra).

The Zassenhaus algebra admits many different realizations, and writing its 
$\frac{1}{2}$-derivations in these realizations may be beneficial for 
elucidating some interesting points. Recall that the \textit{divided powers algebra} $O_1(n)$ is a $p^n$-dimensional associative
commutative algebra with basis $\set{x^i}{0 \le i < p^n}$ and multiplication
$$
x^i x^j = \binom{i+j}{j} x^{i+j} .
$$
It is isomorphic to the reduced polynomial algebra 
$O_n = K[x_1, \dots, x_n]/(x_1^p, \dots, x_n^p)$.
$W_1(n)$ can be viewed as a Lie algebra of derivations $O_1(n)\partial$, where 
$\partial(x^i) = x^{i-1}$.
In such realization, it possesses a basis 
$\set{e_i = x^{i+1} \partial}{-1 \le i \le p^n - 2}$ with multiplication
$$
[e_i,e_j] = \Big(\binom{i+j+1}{j} - \binom{i+j+1}{i} \Big) e_{i+j} .
$$
Though in the general case the formulas expressing the concrete $\frac{1}{2}$-derivation
in terms of this basis appear to be cumbersome, in the case $n=1$ the basis of 
$Der_{\frac{1}{2}}(W_1(1))$ could be written as $\{1, D, \dots, D^{p-1}\}$, where
$D$ is the nilpotent map of the following neat form:
\begin{equation}\label{d}
D (e_i) = \begin{cases}
(i+2)e_{i + 1}, & -1 \le i < p - 2  \\
0             , & i = p - 2 .
\end{cases}
\end{equation}

As was noted by Kuznetsov (Remark at the very end of \cite{kuznetsov}),
$W_1(n)$ for $n>1$ also can be realized as deformation of the current Lie algebra
$W_1(1) \otimes O_1(n-1)$: $\{x,y\} = [x,y] + \Phi(x,y)$ for 
$x,y \in W_1(1) \otimes O_1(n-1)$, where the $2$-cocycle $\Phi$ is defined as
$$
\Phi (e_i \otimes a, e_j \otimes b) = \begin{cases} 
e_{p-2} \otimes (a \partial(b) - b \partial(a)), & i = j = -1      \\
0                                & \text{otherwise}                        
\end{cases}
$$
for $-1 \le i,j \le p-2$ and $a,b\in O_1(n-1)$. 
By Corollary \ref{corr}, 
$$
Der_{\frac{1}{2}} (W_1(1) \otimes O_1(n-1)) \simeq 
Der_{\frac{1}{2}} (W_1(1)) \otimes O_1(n-1) ,
$$
each $\frac{1}{2}$-derivation of $W_1(1)\otimes O(n-1)$ being 
the linear span of elements of the form 
\begin{equation}\label{tens}
D \otimes R_u
\end{equation}
for $D\in Der_{\frac{1}{2}}(W_1(1))$ and $u \in O_1(n-1)$.
According to the general principle, invariants
of algebras can only decrease under deformation (this easily could be made more precise
in the case of $\delta$-derivations following the general format of the Gerstenhaber's
deformation theory, but we will not delve into it), so $Der_{\frac{1}{2}} (W_1(n))$
is ``less or equal'' than 
$Der_{\frac{1}{2}} (W_1(1)) \otimes O_1(n-1)$. The comparison of dimensions ensures
that they are, in fact, equal. This could also be explained by the simple fact that 
for any $D\in Der_{\frac{1}{2}}(W_1(1))$ and $u\in O_1(n-1)$,
$$
\Phi((D\otimes R_u)(W_1(1)\otimes O_1(n-1), W_1(1)\otimes O_1(n-1))) = 0
$$
and
$$
(D\otimes R_u)(\Phi(W_1(1)\otimes O_1(n-1), W_1(1)\otimes O_1(n-1))) = 0 ,
$$
so each $\frac{1}{2}$-derivation of $W_1(1) \otimes O_1(n-1)$ can be lifted,
in a trivial way, to those of $W_1(n)$.

In \cite[\S 3]{filippov-2}, Filippov asked the question: is it true that the commutative
algebra of $\frac{1}{2}$-derivations of a prime Lie algebra does not contain divisors
of zero? 
According to (\ref{zass1}), Zassenhaus algebra provides a negative
answer to this question. In fact, for Zassenhaus algebra the situation is, in a sense,
opposite to the situation without divisors of zero: 
the algebra of $\frac{1}{2}$-derivations is a local one, i.e., its radical, 
consisting of nilpotent elements, is of codimension $1$. The concrete examples
of nilpotent $\frac{1}{2}$-derivations are provided by the map (\ref{d}) and its powers
in the case of the Witt algebra, and, more generally, by the maps of the form 
(\ref{tens}) in the case of the Zassenhaus algebra.

Further examples providing negative answer to the Filippov's question are 
semisimple Lie algebras having $W_1(n) \otimes O_m$ as the socle.
Indeed, semisimple Lie algebras of the form 
$W_1(n) \otimes O_m + 1\otimes \mathcal D$, where $\mathcal D$ is a subalgebra 
of $Der(O_m)$ such that $O_m$ does not have $\mathcal D$-invariant ideals, are prime.
It is possible to prove that under some conditions imposed on $L$, $A$ (like, for example, 
in Corollary \ref{corr}), and on $\mathcal D \subseteq Der(A)$, 
$$
Der_\delta (L\otimes A + 1\otimes \mathcal D) \simeq 
Der_\delta(L) \otimes A^{\mathcal D} .
$$
In particular, as the absence of $\mathcal D$-invariant ideals in $O_m$ implies 
$O_m^{\mathcal D} = K1$, we have:

$$
Der_{\frac{1}{2}} (W_1(n) \otimes O_m + 1\otimes \mathcal D) \simeq 
Der_{\frac{1}{2}} (W_1(n)) \simeq O_n .
$$

According to the classical Block's theorem about the structure of modular 
finite-di\-men\-si\-o\-nal semisimple Lie algebras (\cite[Theorem 9.3]{block}), this, essentially,
exhausts all possible examples of finite-dimensional prime Lie algebras whose
$\frac{1}{2}$-derivations have divisors of zero.

The same constructions provide also infinite-dimensional examples of prime
Lie algebras whose $\frac{1}{2}$-derivations have divisors of zero.
For example, the Lie algebra $W_{\mathbb Q}$, where $\mathbb Q$ is the additive group
of rationals, is simple (see, for example, \cite[Chapter 10, Theorem 3.1]{as-book}).
As $\mathbb Q$ is a group with torsion, the group algebra $K[\mathbb Q]$ contains
divisors of zero. Another example: the Lie algebra $W_1(1) \otimes K[x] + 1\otimes \frac{d}{dx}$
is prime, and it is easy to see, by the same reasoning as for finite-dimensional modular 
semisimple Lie algebras above, that its algebra of 
$\frac{1}{2}$-derivations is isomorphic to $K[x]/(x^p)$.

\section{Non-semigroup gradings}\label{semigr}

A \textit{grading} of a Lie algebra $L$ is the direct sum decomposition 
$L = \bigoplus_{\alpha\in G} L_\alpha$ into subspaces indexed by a set $G$ 
such that for each $\alpha, \beta \in G$ either $[L_\alpha, L_\beta] = 0$
or there is $\gamma(\alpha,\beta)\in G$ such that 
$[L_\alpha, L_\beta] \subseteq L_{\gamma(\alpha,\beta)}$.
All ``interesting'' gradings appearing on practice (like Cartan decompositions,
gradings induced by automorphisms of finite order, 
$\mathbb Z$-gradings associated with filtrations, etc.) are 
\textit{group (or semigroup) gradings},
i.e., $G$ could be embedded into an (additively written) abelian group (or semigroup) such 
that $\gamma(\alpha,\beta) = \alpha + \beta$ for any pair $\alpha,\beta\in G$ for which
$\gamma(\alpha,\beta)$ exists.

However, a priori the existence of such embedding is not obvious, and the natural
question arises whether each grading of a finite-dimensional Lie algebra is a semigroup
grading. In \cite{elduque-1} Elduque constructed a quite unexpected example, refuting
a two-decades old claim by Patera and Zassenhaus \cite[Theorem 1(a)]{zass}. 
Further examples constructed in \cite{elduque-2} allowed to think that non-semigroup 
gradings are quite common.
However, they seem to be produced by ad-hoc trial and error, and no systematic 
method of constructing such examples was known till now.

Suppose $\mathcal D$ is a set of commuting $\delta$-derivations of a Lie algebra $L$
over an algebraically closed field $K$. Consider the standard root space decomposition with
respect to the set $\mathcal D$: 
\begin{equation}\label{root}
L = \bigoplus_{\lambda\in \mathcal D^*} L_\lambda .
\end{equation}
The same simple inductive reasoning based on the binomial formula and used in the proof of the usual multiplicative 
properties of root spaces with respect to ordinary derivations (see, for example,
\cite[Chapter III, \S 2]{jacobson}), can be used to prove that
\begin{equation}\label{mult}
[L_\lambda, L_\mu] \subseteq L_{\delta(\lambda + \mu)}
\end{equation}
for any $\lambda, \mu \in \mathcal D^*$.

When (\ref{root}) will be a semigroup grading and when it will not? Suppose for a moment
that all products between root spaces are nonzero. Then the set
of elements $\lambda\in \mathcal D^*$ such that $L_\lambda \ne 0$, with operation
\begin{equation}\label{circ}
\lambda \circ \mu = \delta(\lambda + \mu)
\end{equation}
should be a semigroup. Associativity of $\circ$, due to (\ref{mult}),  
is equivalent to 
$$
(\delta^2 - \delta)(\lambda - \mu) = 0
$$
for any $\lambda, \mu$.
Clearly, for $\delta\ne 0,1$ this implies that any root space decomposition 
(\ref{root}) with more than one root space would be a non-semigroup grading.

The problem is that some products of root spaces may vanish, so if the ratio
of the number of root spaces to the number of pairs of root spaces with zero product 
is ``small'', the operation (\ref{circ}) on the remaining pairs could be not enough to violate 
associativity.
Accordingly, under the natural assumption of having ``enough'' root spaces with nonzero 
product, we are guaranteed to get a non-semigroup grading, as the following elementary
proposition shows.

\begin{proposition}\label{root1}
Let $L$ be a finite-dimensional Lie algebra over an algebraically closed field,
$\delta \ne 0,1$, and $\mathcal D$ is a set of commuting $\delta$-derivations of $L$. 
Then the root space decomposition (\ref{root}) with respect to $\mathcal D$
is a non-semigroup grading of $L$ if one of the following holds:
\begin{enumerate}
\item
there are three pairwise distinct roots 
$\lambda, \mu, \eta \in \mathcal D^*$ such that $[[L_\lambda, L_\mu], L_\eta] \ne 0$;
\item
there are two distinct roots $\lambda, \mu \in \mathcal D^*$ such that 
$[[L_\lambda, L_\lambda],L_\mu] \ne 0$.
\end{enumerate}
\end{proposition}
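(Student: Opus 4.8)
The plan is to show that in each of the two cases, the operation $\circ$ defined by $\lambda \circ \mu = \delta(\lambda + \mu)$ on the set of nonzero roots fails to be associative, and moreover that this failure is ``witnessed'' by actual nonzero products of root spaces, so that no relabelling of the grading by a semigroup is possible. Recall from the discussion preceding the proposition that if $L = \bigoplus_{\lambda \in \mathcal D^*} L_\lambda$ is a semigroup grading, then whenever $[L_\lambda, L_\mu] \ne 0$ the value $\gamma(\lambda,\mu)$ is forced, and associativity of the grading semigroup on the support would require $\gamma(\gamma(\lambda,\mu),\eta) = \gamma(\lambda,\gamma(\mu,\eta))$ whenever both triple products are nonzero. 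So the strategy is: assume for contradiction that (\ref{root}) is a semigroup grading, use (\ref{mult}) to identify $\gamma$ with $\circ$ on the relevant pairs, and then derive a contradiction from the relation $(\delta^2 - \delta)(\lambda - \mu) = 0$ forced by associativity, using $\delta \ne 0, 1$ to conclude $\lambda = \mu$, contradicting distinctness.

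First I would handle case (i). Suppose $[[L_\lambda, L_\mu], L_\eta] \ne 0$ with $\lambda, \mu, \eta$ pairwise distinct. By (\ref{mult}), $[L_\lambda, L_\mu] \subseteq L_{\delta(\lambda+\mu)}$ and $[L_\mu, L_\eta] \subseteq L_{\delta(\mu+\eta)}$; since the triple product $[[L_\lambda, L_\mu], L_\eta]$ is nonzero, so is $[L_\lambda, L_\mu]$, and by the Jacobi identity $[[L_\lambda, L_\mu], L_\eta]$ is contained in $[[L_\mu, L_\eta], L_\lambda] + [[L_\eta, L_\lambda], L_\mu]$, so at least one of $[[L_\mu, L_\eta], L_\lambda]$, $[[L_\eta, L_\lambda], L_\mu]$ is nonzero as well — in particular $[L_\mu, L_\eta]$ or $[L_\eta, L_\lambda]$ is nonzero. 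If (\ref{root}) were a semigroup grading, the nonzero triple product $[[L_\lambda, L_\mu], L_\eta] \subseteq L_{\delta(\delta(\lambda+\mu)+\eta)}$ would have to sit in a single homogeneous component, and comparing with the Jacobi-partner term shows $\delta(\delta(\lambda+\mu)+\eta) = \delta(\delta(\mu+\eta)+\lambda)$ (say), which after simplification gives $(\delta^2 - \delta)(\lambda - \eta) = 0$, hence $\lambda = \eta$, a contradiction. The bookkeeping of exactly which pair of Jacobi terms is nonzero needs a little care, but in every sub-case one gets an equation of the form $(\delta^2-\delta)(\alpha - \beta) = 0$ with $\alpha \ne \beta$ among $\{\lambda,\mu,\eta\}$.

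For case (ii), suppose $[[L_\lambda, L_\lambda], L_\mu] \ne 0$ with $\lambda \ne \mu$. By (\ref{mult}), $[L_\lambda, L_\lambda] \subseteq L_{2\delta\lambda}$ and so $[[L_\lambda, L_\lambda], L_\mu] \subseteq L_{\delta(2\delta\lambda + \mu)}$. On the other hand, applying the Jacobi identity to move one factor, $[[L_\lambda, L_\lambda], L_\mu] \subseteq [[L_\lambda, L_\mu], L_\lambda]$, which by (\ref{mult}) lies in $L_{\delta(\delta(\lambda+\mu)+\lambda)}$. If (\ref{root}) is a semigroup grading the nonzero product forces these two homogeneous components to coincide: $\delta(2\delta\lambda + \mu) = \delta(\delta(\lambda+\mu)+\lambda)$. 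Dividing by $\delta \ne 0$ and simplifying yields $(\delta^2 - \delta)(\lambda - \mu) = 0$, whence $\lambda = \mu$, contradicting $\lambda \ne \mu$.

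I expect the main obstacle to be the careful argument that a \emph{nonzero} triple product of root spaces, in a purported semigroup grading, really must lie in a \emph{single} component whose index is computed two different ways — i.e. pinning down that the two expressions obtained via (\ref{mult}) and via the Jacobi identity are genuinely forced to be equal rather than merely both being valid upper bounds. The point is that $[[L_\lambda,L_\mu],L_\eta]$ and its Jacobi rearrangements are literally the same subspace of $L$, so if it is nonzero and $L$ is graded, that subspace meets a unique $L_\nu$, and every upper bound of the form $L_{(\cdots)}$ coming from (\ref{mult}) must contain that same $\nu$; two distinct such indices would force the subspace to be zero. Once this is said cleanly, the rest is the elementary algebra with $\delta^2 - \delta \ne 0$ and the distinctness of the roots, exactly mirroring the associativity computation already carried out in the text above.
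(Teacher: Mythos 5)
Your proposal is correct and follows essentially the same route as the paper: assume the decomposition is a semigroup grading, use the Jacobi identity to produce a second nonvanishing triple product, equate the two indices forced by (\ref{mult}), and derive $(\delta^2-\delta)(\alpha-\beta)=0$ for two distinct roots, a contradiction since $\delta\ne 0,1$. Your closing remark about why the two index computations must literally coincide (a nonzero subspace cannot lie in two distinct homogeneous components) is exactly the point the paper leaves implicit, so nothing is missing.
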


\begin{proof}
(i)
Suppose that (\ref{root}) is a semigroup grading.
The Jacobi identity and the condition of Proposition imply that at least one of the 
triple products $[[L_\eta, L_\lambda], L_\mu]$ and $[[L_\mu, L_\eta], L_\lambda]$ 
does not vanish too. Suppose, without loss of generality, that 
$[L_\lambda, [L_\mu, L_\eta]] \ne 0$. Then both expressions 
$(\lambda \circ \mu) \circ \eta$ and $\lambda \circ (\mu \circ \eta)$ exist, hence
they are equal, what implies, as noted above, $\lambda = \eta$.

(ii)
Similarly: the Jacobi identity implies $[[L_\lambda, L_\mu], L_\lambda] \ne 0$
and the same reasoning applies to derive $\lambda = \mu$, a contradiction.
\end{proof}

On the other hand, it is obvious that if there is only one root space,
for example, if all $\delta$-derivations in $\mathcal D$ are nilpotent, 
then the grading collapses to one summand and we get nothing interesting in this way.
 
Proposition \ref{root1} provides a way to construct non-semigroup gradings of Lie
algebras in a systematic way. Some of the examples constructed by Elduque
could also be recovered using this scheme. For example, consider the $4$-dimensional Lie algebra 
from \cite[\S 1]{elduque-2}. This is an algebra with basis $\{a, u, v, w\}$ and 
multiplication table: 
$$
[a,u]=u, \quad [a,v]=w, \quad [a,w]=v ,
$$
all other products between basic elements being 
zero. It has the following $(-1)$-derivation:
$$
a \mapsto 0, \quad u \mapsto 0, \quad v \mapsto -w, \quad w \mapsto v .
$$
The root space decomposition with respect to this $(-1)$-derivation is:
$$
(Ka \oplus Ku) \oplus K(v+iw) \oplus K(v-iw) ,
$$
the direct summands correspond to eigenvalues $0$, $i$ and $-i$. We have:
$$
0 \circ 0 = 0, \quad 0 \circ i = -i, \quad 0 \circ (-i) = i .
$$
This is exactly the operation on the set of $3$ elements obtained in 
\cite{elduque-2} (though the grading itself is a bit different), which is shown there not to be embeddable 
in any semigroup.

On the other hand, not all examples of non-semigroup gradings could be obtained
in this way.
For example, for each $\delta\ne 0$, the space of $\delta$-derivations of the
$16$-dimensional nilpotent Lie algebra from \cite{elduque-1}, 
represented as matrices in the given basis of the algebra, has a basis which could be
divided into $2$ or $3$ non-intersecting sets: all matrices in the first set
are strictly upper-triangular; for each pair $X$, $Y$ of matrices from the second set,
$XY = 0$ and $XBY = 0$ holds for any matrix $B$ from the basis; 
and, in the case $\delta = \frac{1}{2}$, the identity matrix. 
This is verified on computer (see Appendix).
Consequently, each $\delta$-derivation of that algebra can be represented as the sum of 
a (possibly zero) multiplication by a field element and a nilpotent $\delta$-derivation. 

Note also that, unfortunately, we cannot in this way get a (negative) answer
to the question from \cite{elduque-1}: is every grading of a \emph{simple}
finite-dimensional Lie algebra over an algebraically closed field of characteristic zero a semigroup grading?
For, as mentioned in the previous section, either by combined results of Theorems FA, FB 
and FC, or by Theorem LL, among such algebras only $sl(2)$ has nontrivial 
$\delta$-derivations, and, obviously, every grading of $sl(2)$ is a semigroup grading.

In conclusion of this section, let us note another fact which, though, not
directly related to non-semigroup gradings, also follows from elementary considerations
related to the root space decomposition with respect to $\delta$-derivations:

\begin{proposition}
For any non-nilpotent $\delta$-derivation of a finite-dimensional perfect Lie algebra over 
the field of complex numbers, $\frac{1}{2} \le |\delta| \le 1$ and $\delta$ is algebraic.
\end{proposition}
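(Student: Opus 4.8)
The plan is to reduce first to the case where $D$ is semisimple, and then to use the multiplicativity rule $[\mathrm{Der}_\delta(L),\mathrm{Der}_{\delta'}(L)]\subseteq\mathrm{Der}_{\delta\delta'}(L)$ to show that, unless $\delta$ is a root of unity, the operator $D$ must lie in the centroid of $L$ --- which, for $L$ perfect, forces $\delta=\tfrac12$. (We may of course assume $\delta\neq 0$, the statement being vacuous otherwise, since for perfect $L$ there are no nonzero $0$-derivations.)

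The reduction: as $D$ is non-nilpotent, its Jordan semisimple part $D_s$ is nonzero, and $D_s$ is again a $\delta$-derivation. Indeed, the condition $D\in\mathrm{Der}_\delta(L)$ reads as an intertwining relation $D\circ m=\delta\, m\circ(D\otimes 1+1\otimes D)$ between $D$ acting on $L$ and $\delta(D\otimes 1+1\otimes D)$ acting on $L\otimes L$, where $m$ is the bracket; and any relation $A\circ T=T\circ B$ is inherited by semisimple parts, $A_s\circ T=T\circ B_s$, because one may choose a single polynomial $r$ with $r(A)=A_s$ and $r(B)=B_s$ simultaneously (interpolating over $\mathrm{Spec}(A)\cup\mathrm{Spec}(B)$, which is consistent since a common value $\lambda$ imposes the same local condition for both), while $(D\otimes 1+1\otimes D)_s=D_s\otimes 1+1\otimes D_s$. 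So from now on $D$ is semisimple and nonzero.

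Now two observations. First, the ``spectrum'' $\Sigma(L):=\set{\delta'\in K}{\mathrm{Der}_{\delta'}(L)\neq 0}$ is finite: the condition $E\in\mathrm{Der}_{\delta'}(L)$ is the vanishing of an affine-linear family (in $\delta'$) of linear maps $gl(L)\to\mathrm{Hom}(\Lambda^2L,L)$, so $\Sigma(L)$ is Zariski-closed in $K$; and since $L$ is perfect, $\mathrm{Der}_0(L)=0$ (a $0$-derivation annihilates $[L,L]=L$), so $0\notin\Sigma(L)$ and $\Sigma(L)$ is finite. Second, iterating $[\mathrm{Der}_\delta(L),\mathrm{Der}_{\delta'}(L)]\subseteq\mathrm{Der}_{\delta\delta'}(L)$ starting from $\mathrm{ad}\,L\subseteq\mathrm{Der}_1(L)$ gives $(\mathrm{ad}_D)^k(\mathrm{ad}\,L)\subseteq\mathrm{Der}_{\delta^k}(L)$ for every $k$. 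Suppose $\delta$ is not a root of unity; then $\{\delta^k\}_{k\ge 1}$ is infinite, so $\mathrm{Der}_{\delta^k}(L)=0$ --- hence $(\mathrm{ad}_D)^k(\mathrm{ad}\,L)=0$ --- for infinitely many $k$. But $D$ semisimple makes $\mathrm{ad}_D$ a semisimple operator on $gl(L)$, for which $\ker\big((\mathrm{ad}_D)^k\big)=\ker(\mathrm{ad}_D)$; therefore $\mathrm{ad}_D(\mathrm{ad}\,L)=0$, i.e. $D$ commutes with every $\mathrm{ad}_x$, i.e. $D$ lies in the centroid of $L$. Since the centroid is contained in $\mathrm{Der}_{1/2}(L)$, subtracting the $\delta$- and $\tfrac12$-derivation identities for $D$ yields $(1-2\delta)\,D([x,y])=0$ for all $x,y$; as $L=[L,L]$ and $D\neq 0$, this forces $\delta=\tfrac12$.

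Thus $\delta$ is a root of unity or $\delta=\tfrac12$, so $|\delta|\in\{\tfrac12,1\}$ and $\delta$ is algebraic, which proves the proposition (and in fact slightly more). The one step I expect to need real care is the reduction to semisimple $D$: because $\mathrm{Der}_\delta(L)$ for $\delta\neq 1$ is not a Lie subalgebra of $gl(L)$, one cannot simply invoke algebraicity of $\mathrm{Aut}(L)$, and must instead check directly that the intertwining relation is preserved by Jordan decomposition; everything after that is routine linear algebra together with the commutator rule for $\delta$-derivations recalled above.
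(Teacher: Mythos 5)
Your argument is correct, and it takes a genuinely different route from the paper's. The paper works directly with the root space decomposition $L=\bigoplus_\lambda L_\lambda$ relative to $D$: perfectness gives $L=\sum_{\lambda,\mu}[L_\lambda,L_\mu]$, so by $[L_\lambda,L_\mu]\subseteq L_{\delta(\lambda+\mu)}$ every root has the form $\delta(\lambda+\mu)$; iterating yields $M\le|\delta|^n2^{n+1}M$ for $M=\max|\lambda|$ (hence $|\delta|\ge\frac{1}{2}$ unless all roots vanish, i.e.\ $D$ is nilpotent), a dual minimum estimate gives $|\delta|\le 1$, and the vanishing determinant of the resulting integral linear system gives algebraicity. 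You instead (a) pass to the semisimple Jordan part $D_s$ --- your intertwining argument via a common interpolating polynomial is sound, and equivalently one could just let $D_s$ act by $\lambda$ on $L_\lambda$ and invoke the inclusion $[L_\lambda,L_\mu]\subseteq L_{\delta(\lambda+\mu)}$ already proved in the paper; (b) note that $\{\delta'\,:\,Der_{\delta'}(L)\ne 0\}$ is Zariski-closed in the affine line and proper (since $Der_0(L)=0$ for perfect $L$), hence finite; and (c) feed $\mathrm{ad}\,L\subseteq Der(L)$ into the rule $[Der_\delta(L),Der_{\delta'}(L)]\subseteq Der_{\delta\delta'}(L)$, using $\ker\big((\mathrm{ad}_{D_s})^k\big)=\ker(\mathrm{ad}_{D_s})$ for the semisimple operator $\mathrm{ad}_{D_s}$ on $gl(L)$. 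Every step checks out, and your route buys a strictly stronger conclusion: $\delta$ is either $\frac{1}{2}$ or a root of unity, not merely algebraic with $\frac{1}{2}\le|\delta|\le 1$ --- this is worth stating explicitly. What the paper's route buys in exchange is self-containedness (no Jordan decomposition on $L\otimes L$, no Zariski-closedness argument) and a proof in which perfectness enters the lower bound $|\delta|\ge\frac{1}{2}$ directly; but if your aim is the sharpest statement, your argument is preferable.
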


\begin{proof}
Let $L$ be such a Lie algebra and $D$ is its $\delta$-derivation.
From (\ref{root}) (where $\mathcal D$ is assumed to consist of a single element 
$D$ and the set of corresponding roots is denoted as $R \subset \mathbb C$), 
it follows that
$$
L = [L,L] = \sum_{\lambda, \mu \in R} [L_\lambda, L_\mu] .
$$
Hence, in view of (\ref{mult}), for each root $\eta\in R$, 
$L_\eta \subseteq \sum_{\eta = \delta(\lambda + \mu)} L_{\delta(\lambda + \mu)}$.
In particular, for each $\eta\in R$ the set over which the last summation is performed is 
not empty, i.e., there are $\lambda, \mu\in R$ such that 
\begin{equation}\label{sum}
\eta = \delta(\lambda + \mu) .
\end{equation}
By induction, for any $\eta\in R$ and $n \in \mathbb N$, there are 
$\lambda_1, \dots, \lambda_{2^{n+1}}\in R$ such that 
$\eta = \delta^n (\lambda_1 + \dots + \lambda_{2^{n+1}})$. Then, denoting 
$M = \max\limits_{\lambda\in R} |\lambda|$, we have $M \le |\delta|^n 2^{n+1} M$ for any
$n\in \mathbb N$, and either 
$$
|\delta| \ge \lim_{n\to\infty} \frac{1}{\sqrt[n]{2^{n+1}}} = \frac{1}{2} ,
$$
or $M = 0$. In the latter case the set of roots consists of a single root $0$, whence
$D$ is nilpotent.

Similarly, denoting 
$$
N = 
\min_{\substack{k_1\lambda_1 + \dots + k_m\lambda_m \ne 0 \\ 
                \lambda_i\in R, \> k_i\in \mathbb Z}} |k_1\lambda_1 + \dots + k_m\lambda_m|
$$
(as $R$ is finite and does not consist of a single element $0$, this minimum exists), 
we have $|\eta| \ge |\delta|^n N$ for any nonzero $\eta\in R$ and $n\in \mathbb N$, 
whence $|\delta| \le 1$. 

Finally, writing the condition (\ref{sum}) for each root $\eta\in R$, we get a homogeneous
system of $|R|$ equations with coefficients depending linearly, over $\mathbb Z$, 
on $\delta$, and in $|R|$ unknowns.
This system has a nonzero solution, hence its determinant equal to zero, 
what provides a nonzero polynomial with integer coefficients vanishing on $\delta$.
\end{proof}

\section{$\delta$-derivations and $\delta$-superderivations of prime Lie superalgebras}
\label{super}

$\delta$-derivations of superalgebras are defined, as in the case of ordinary algebras,
by the condition (\ref{delta-der}).
One may argue that the more natural approach would be to generalize the corresponding
super notion.
Recall that a homogeneous linear map $D: A \to A$
is called a \textit{superderivation of a superalgebra} $A = A_0 \oplus A_1$ if
$$
D(ab) = D(a)b + (-1)^{\deg(D)\deg(a)} aD(b)
$$
for any two homogeneous elements $a,b\in A$.
The \textit{supercentroid} of $A$ is the set of all homogeneous linear maps 
$\chi: A \to A$ such that 
$$
\chi(ab) = \chi(a)b = (-1)^{\deg(\chi)\deg(a)} a\chi(b)
$$
for any two homogeneous elements $a,b\in A$.
Accordingly, let us call a homogeneous linear map $D$ a 
\textit{$\delta$-superderivation}
of $A$ if
$$
D(ab) = \delta D(a)b + \delta (-1)^{\deg(D)\deg(a)} aD(b)
$$
for any two homogeneous elements $a,b\in A$. Like in the ordinary case, this 
generalizes superderivations (for $\delta=1$) and elements of supercentroid (for $\delta = \frac{1}{2}$).

Here we will consider $\delta$-de\-ri\-va\-tions of Lie superalgebras, like in 
\cite{kayg}, as well as $\delta$-su\-per\-de\-ri\-va\-tions.
These two notions overlap: even $\delta$-superderivations are the same as 
homogeneous (i.e., preserving the $\mathbb Z_2$-grading) $\delta$-derivations. 

Though the mathematics below is pretty much elementary (modulo existing results), we call the reader 
to distinct carefully between ordinary, $\mathbb Z_2$-graded and super variants
of different constructions, as we pass freely back and forth between them.

Let $A$ and $B$ be two superalgebras. Their tensor product $A \otimes B$
as of ordinary algebras (i.e., with multiplication defined similarly to (\ref{curr}), 
as for the current Lie algebras), will, naturally, carry a superalgebra structure, 
with the zero component 
$$
(A \otimes B)_0 = (A_0 \otimes B_0) \oplus (A_1 \otimes B_1) .
$$
If $G = G_0 \oplus G_1$, the Grassmann superalgebra in the countable number of
odd variables, $(A \otimes G)_0$ is nothing but the \textit{Grassmann envelope of $A$},
denoted as $G(A)$ -- a construction proved to be very useful in many questions pertained 
to varieties of (ordinary) algebras. 
It is well known (and easy to see) that for an arbitrary superalgebra $A$,
$G(A)$ satisfies some identity $w$ if and only if $A$ satisfies the corresponding 
superidentity (obtained from $w$ by appropriately injecting
signs). In particular, $L$ is a Lie superalgebra if and only if $G(L)$ is a Lie algebra
(on the other hand, the whole $L \otimes G$ is, in general, not a Lie superalgebra).

Though it is possible to develop a super or $\mathbb Z_2$-graded modifications of the 
technique from \cite{low}, in general it seems to be fairly 
difficult to say something general about $\delta$-derivations of $G(L)$ in terms of $L$.
However, the following elementary observations take place:

\begin{lemma}\label{der}
Let $A$ and $B$ be two superalgebras.
\begin{enumerate}
\item
If $D$ is a $\delta$-derivation of $A$, then the map 
$\widehat D: (A\otimes B)_0 \to A\otimes B$ defined as
\begin{equation*}
a_0 \otimes b_0 + a_1 \otimes b_1 \mapsto D(a_0) \otimes b_0 + D(a_1) \otimes b_1
\end{equation*}
for $a_0\in A_0$, $a_1\in A_1$, $b_0\in B_0$, $b_1\in B_1$, is a 
$\delta$-derivation of $(A \otimes B)_0$ with values in $A\otimes B$.
\item
If $D$ is a $\delta$-superderivation of $A$, and $\chi$ is an element of the supercentroid
of $B$ such that $\deg(D) = \deg(\chi)$, then the map 
$\widehat D: (A\otimes B)_0 \to (A\otimes B)_0$ defined as
$$
a_0 \otimes b_0 + a_1 \otimes b_1 \mapsto D(a_0) \otimes \chi(b_0) 
                                        + D(a_1) \otimes \chi(b_1)
$$
for $a_0\in A_0$, $a_1\in A_1$, $b_0\in B_0$, $b_1\in B_1$, is a 
$\delta$-derivation of $(A \otimes B)_0$.
\end{enumerate}
\end{lemma}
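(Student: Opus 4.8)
The plan is to verify each of the two displayed identities by a direct but careful computation, paying attention to signs coming from the $\mathbb Z_2$-grading and from the super-Leibniz rules. The essential point in both cases is that the multiplication on $(A\otimes B)_0$ is inherited from the ordinary tensor-product multiplication $(a\otimes b)(a'\otimes b') = aa'\otimes bb'$, so that when we write a homogeneous element of $(A\otimes B)_0$ in the form $a_0\otimes b_0 + a_1\otimes b_1$ (with the subscript recording the parity), the product of two such elements splits into four terms indexed by the pairs of parities. I would set up once, at the beginning of the proof, the convention that $x = x_0\otimes b_0 + x_1\otimes b_1$ and $y = y_0\otimes c_0 + y_1\otimes c_1$ denote generic homogeneous elements of $(A\otimes B)_0$, compute $xy$ term by term, and then compare $\widehat D(xy)$ with $\delta\,\widehat D(x)\cdot y + \delta\,x\cdot\widehat D(y)$ (the $\delta$-derivation law with values in the relevant module).

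For part (i): here $\widehat D$ acts only on the first tensor factor, applying the $\delta$-derivation $D$ of $A$ and leaving $B$ untouched, and $D$ is \emph{not} assumed homogeneous, so no extra signs appear from that side. On each of the four parity components, applying $\widehat D$ to $x_iy_j\otimes b_ic_j$ gives $D(x_iy_j)\otimes b_ic_j$, and since $D$ is a $\delta$-derivation of $A$ this equals $\delta D(x_i)y_j\otimes b_ic_j + \delta\,x_iD(x_j)... $ — more precisely $\delta(D(x_i)y_j + x_iD(y_j))\otimes b_ic_j$. Reassembling the four components, one gets exactly $\delta\,\widehat D(x)\cdot y + \delta\,x\cdot\widehat D(y)$, because the module action of $(A\otimes B)_0$ on $A\otimes B$ is again the obvious one and the parities bookkeep correctly. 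I expect this part to be entirely routine: there are no sign subtleties because $\widehat D$ is untwisted on the $B$-side.

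For part (ii): now $D$ is a $\delta$-\emph{super}derivation of $A$ and $\chi$ an element of the supercentroid of $B$ with $\deg(D) = \deg(\chi) =: \sigma$, and $\widehat D$ acts as $D\otimes\chi$ (restricted to the even part). The computation is the same four-term expansion, but each term $x_iy_j\otimes b_ic_j$ now produces, via the super-Leibniz rule for $D$ and the supercentroid condition for $\chi$, terms carrying signs $(-1)^{\sigma\deg(x_i)}$ from the $D$ side and $(-1)^{\sigma\deg(b_i)}$ from the $\chi$ side. The crucial observation — and this is where I expect the only real care to be needed — is that on $(A\otimes B)_0$ we have $\deg(x_i) = \deg(b_i)$ for every surviving summand (that is precisely what ``even part'' means), so the two sign factors $(-1)^{\sigma\deg(x_i)}$ and $(-1)^{\sigma\deg(b_i)}$ coincide and multiply to $(-1)^{2\sigma\deg(x_i)} = 1$. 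Thus all the super-signs cancel in pairs, and what is left is again the plain $\delta$-derivation identity $\widehat D(xy) = \delta\,\widehat D(x)y + \delta\, x\,\widehat D(y)$, with no residual signs — confirming that $\widehat D$ is an honest ($\mathbb Z_2$-graded, in fact) $\delta$-derivation of $(A\otimes B)_0$. I would also remark at the end that $\widehat D$ is homogeneous of degree $\sigma$ with respect to the $\mathbb Z_2$-grading on $(A\otimes B)_0$ coming from the total degree, which is the statement one actually wants to feed into the Grassmann-envelope argument in the rest of \S\ref{super}. The main (and essentially only) obstacle is not difficulty but discipline: keeping the four parity components and the two independent sources of signs straight, and noticing the degree-matching $\deg(x_i)=\deg(b_i)$ that makes everything collapse.
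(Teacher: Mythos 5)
Your proposal is correct and coincides with the paper's proof, which is literally just ``Direct verification'': you carry out that verification as intended, including the one genuinely delicate point, namely that in part (ii) the two Koszul signs $(-1)^{\deg(D)\deg(a_i)}$ and $(-1)^{\deg(\chi)\deg(b_i)}$ cancel precisely because $\deg(a_i)=\deg(b_i)$ on the even part $(A\otimes B)_0$ and $\deg(D)=\deg(\chi)$. Nothing further is needed.
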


\begin{proof}
Direct verification.
\end{proof}

And similarly:

\begin{lemma}\label{yoyod}
Let $L = L_0 \oplus L_1$ be a Lie superalgebra, $\form$ a nondegenerate 
supersymmetric invariant bilinear form on $L$, and $f: G \to K$ a nonzero linear form. 
Then the bilinear map $\form: G(L) \times G(L) \to K$ defined as
\begin{multline*}
(x_0 \otimes g_0 + x_1 \otimes g_1, x_0^\prime \otimes g_0^\prime + x_1^\prime \otimes g_1^\prime)
\\= (x_0, x_0^\prime) \otimes f(g_0g_0^\prime) 
  + (x_0, x_1^\prime) \otimes f(g_0g_1^\prime) 
  + (x_1, x_0^\prime) \otimes f(g_1g_0^\prime) 
  + (x_1, x_1^\prime) \otimes f(g_1g_1^\prime)
\end{multline*}
for $x_0,x_0^\prime \in L_0$, $x_1,x_1^\prime \in L_1$, $g_0,g_0^\prime \in G_0$, 
$g_1,g_1^\prime \in G_1$, is a nondegenerate symmetric 
invariant bilinear form on $G(L)$.
\end{lemma}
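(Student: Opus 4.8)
The plan is to check the three asserted properties---symmetry, invariance, and nondegeneracy---in turn, treating the first two as sign bookkeeping and concentrating the real effort on the third. For symmetry, I would verify that each of the four terms of the defining formula is unchanged when the two arguments are swapped. The even-even term is immediate, since $\form$ restricted to $L_0$ is symmetric and even elements of $G$ are central. For the odd-odd term the supersymmetry relation $(x_1,x_1')=-(x_1',x_1)$ contributes a sign, but so does the anticommutation $g_1g_1'=-g_1'g_1$ in $G$, and the two signs cancel; the two mixed terms get interchanged with each other, again using that even Grassmann elements commute and that $(x_0,x_1')=(x_1',x_0)$. Thus symmetry of the form on $G(L)$ falls out of supersymmetry of $\form$ together with the sign rules of $G$.

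For invariance I would use that the bracket in $G(L)=(L\otimes G)_0$ sends $[x\otimes g,\,y\otimes g']$ to $[x,y]$ tensored with the Grassmann product $gg'$ (up to the Koszul sign), so that evaluating $\form$ on a triple bracket produces the single scalar factor $f(gg'g'')$, attached to the $L$-factor $([x,y],z)$ on one side and to $(x,[y,z])$ on the other. Since $G$ is associative and supercommutative, the Grassmann factor $f(gg'g'')$ is literally identical on the two sides, so invariance reduces to invariance of $\form$ on $L$ with matching signs; notably this step uses nothing of $f$ beyond linearity. Both verifications are routine and I would record them only briefly, in the spirit of the ``direct verification'' proof of Lemma~\ref{der}.

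The substance of the lemma is nondegeneracy. Writing $G(L)=(L_0\otimes G_0)\oplus(L_1\otimes G_1)$ and using that the invariant supersymmetric form is even (so that the two mixed terms drop and the form block-diagonalizes; the general case is handled by retaining those terms), the form becomes $\form|_{L_0}\otimes\beta_0$ on the first summand and $\form|_{L_1}\otimes\beta_1$ on the second, where $\beta_0(g,g')=f(gg')$ on $G_0$ and $\beta_1(h,h')=f(hh')$ on $G_1$ (odd times odd landing back in $G_0$). Nondegeneracy of $\form$ forces its restrictions to $L_0$ (symmetric) and to $L_1$ (skew) to be nondegenerate, and since the radical of a tensor product of bilinear forms, one of which is nondegenerate, is $L_i\otimes\mathrm{rad}(\beta_\bullet)$, the whole question reduces to showing that the scalar pairings $\beta_0$ and $\beta_1$ induced on $G$ by $f$ have trivial radical.

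This last reduction is the main obstacle, and is where I expect all the work to lie. I would argue it in the monomial basis $\{\xi_S\}$ of $G$ indexed by finite subsets $S$ of the countable index set: since $f\neq 0$ there are monomials on which it does not vanish, and given a nonzero homogeneous $g=\sum_S c_S\xi_S$ I would produce, from such a monomial $\xi_M$ and a suitable complementary factor $\xi_N$, a product $g\xi_N$ whose coefficient at $\xi_M$ is nonzero, so that $f(g\xi_N)\neq 0$ exhibits $g$ outside the radical. The delicate point is to choose $\xi_N$ so that the contributions of the various $\xi_S$ occurring in $g$ do not cancel at $\xi_M$; here I would exploit that $G$ has infinitely many odd generators, so that fresh variables are always available to separate the supports of the monomials appearing in $g$. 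Carrying out this combinatorial bookkeeping---tracking the signs from reordering generators and ruling out accidental cancellation at $\xi_M$---is the heart of the argument. Once $\beta_0$ and $\beta_1$ are known to have trivial radical, the tensor-product reduction of the previous paragraph immediately yields nondegeneracy of the constructed form on $G(L)$.
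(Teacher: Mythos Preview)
Your handling of symmetry and invariance is correct and is exactly the ``direct verification'' the paper intends. The difficulty is nondegeneracy: the combinatorial scheme you sketch cannot be completed, and in fact the assertion is false for an arbitrary nonzero $f$. Take $f$ to be the linear form reading off the coefficient of $1\in G_0$. For $g=\xi_1\xi_2\in G_0$ and any $g'\in G$, every monomial appearing in $gg'$ involves both $\xi_1$ and $\xi_2$, so $f(gg')=0$; hence for any nonzero $x_0\in L_0$ (say with $L=L_0$ an ordinary semisimple Lie algebra and $\form$ its Killing form, to avoid the mixed terms entirely) the element $x_0\otimes\xi_1\xi_2$ lies in the radical. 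Your plan breaks precisely here: for the $\xi_M$-coefficient of $g\xi_N$ to be nonzero you need some $S$ in the support of $g$ with $S\subseteq M$ and $N=M\setminus S$, but when $M=\varnothing$ and $g=\xi_1\xi_2$ there is no such $S$, and the availability of fresh generators is irrelevant since $M$ is dictated by $f$. (There is also a secondary slip: a nonzero $\xi_M$-coefficient of $g\xi_N$ does not by itself give $f(g\xi_N)\ne 0$ unless you control the remaining monomials as well.)

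What Theorem~\ref{onetwo} actually requires is only the \emph{existence} of some $f$ for which the form is nondegenerate, so the natural repair is to strengthen the hypothesis on $f$---for instance, demand that for every finite index set $T$ there exist $M\supseteq T$ with $|M|$ even and $f(\xi_M)\ne 0$. Under this assumption your strategy does go through: given nonzero homogeneous $g=\sum_S c_S\xi_S$, choose $M$ containing the union of all supports, pick $S_0$ in the support of minimal cardinality, and set $N=M\setminus S_0$; for every other $S$ in the support one has $S\cap N=S\setminus S_0\ne\varnothing$, so $\xi_S\xi_N=0$, whence $g\xi_N=\pm c_{S_0}\xi_M$ and $f(g\xi_N)\ne 0$. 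But with the bare hypothesis ``$f\ne 0$'' no argument can succeed.
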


\begin{proof}
Direct verification.
\end{proof}

A thorough analysis, sometimes with slight modifications, of some of the Filippov's 
arguments allows to establish a bit more than actually was stated in his papers.

\begin{lemma}\label{ideal}
If $I$ is a nonzero ideal of a prime Lie superalgebra $L$, $\delta \ne 0$,
and $D$ is a nonzero $\delta$-derivation or $\delta$-superderivation of $L$, 
then $D(I) \ne 0$.
\end{lemma}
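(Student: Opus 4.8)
The plan is to argue by contradiction: assume $D(I) = 0$ and show this forces $D = 0$. (Throughout, an ideal of the superalgebra $L$ is understood in the $\mathbb{Z}_2$-graded sense, so that it is spanned by its homogeneous elements — the natural convention here; this is also the notion of ideal implicit in "prime Lie superalgebra".) The heart of the matter is the claim that $D(I) = 0$ together with $\delta \ne 0$ forces $[I, D(L)] = 0$; once that is in hand, primeness finishes everything off.

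For the claim, take first a $\delta$-derivation $D$. For $x \in I$ and any $y \in L$ we have $[x,y] \in I$, hence $0 = D([x,y]) = \delta[D(x),y] + \delta[x,D(y)] = \delta[x,D(y)]$, and dividing by $\delta \ne 0$ gives $[x,D(y)] = 0$; thus $[I,D(L)] = 0$. For a $\delta$-superderivation $D$, pick homogeneous $x \in I$ and homogeneous $y \in L$; the same computation reads $0 = D([x,y]) = \delta[D(x),y] + \delta(-1)^{\deg(D)\deg(x)}[x,D(y)] = \delta(-1)^{\deg(D)\deg(x)}[x,D(y)]$, and again $[x,D(y)] = 0$. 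Since homogeneous elements span $I$ and $L$, once more $[I,D(L)] = 0$. (Alternatively, one can treat the $\delta$-superderivation case by splitting a general $x\in I$ as $x=x_0+x_1$, noting $D(x_0)=D(x_1)=0$ since $D$ is homogeneous and these two images lie in distinct $\mathbb{Z}_2$-components, and comparing components of $0=D([x,y])$.)

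Next I would consider the annihilator $\mathrm{Ann}_L(I) = \{z \in L : [I,z] = 0\}$. A one-line application of the (super) Jacobi identity to homogeneous generators shows this is an ideal of $L$: for homogeneous $j \in I$, $z \in \mathrm{Ann}_L(I)$, $v \in L$ one has $[j,[v,z]] = [[j,v],z] + (-1)^{\deg(j)\deg(v)}[v,[j,z]] = 0$, because $[j,v] \in I$ and $[j,z] = 0$; hence $[v,z] \in \mathrm{Ann}_L(I)$. By the previous paragraph, $D(L) \subseteq \mathrm{Ann}_L(I)$.

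Finally, $[I, \mathrm{Ann}_L(I)] = 0$ by definition, $I \ne 0$, and $L$ is prime, so $\mathrm{Ann}_L(I) = 0$; therefore $D(L) = 0$, i.e. $D = 0$, contradicting the hypothesis. Conceptually the argument is nothing but "the annihilator of a nonzero ideal in a prime algebra is zero"; the only step needing attention is the bookkeeping of $\mathbb{Z}_2$-degrees and super-signs in the $\delta$-superderivation case, which is routine. This is essentially Filippov's reasoning, adapted to the super setting.
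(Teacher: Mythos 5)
Your proof is correct and is precisely the argument the paper delegates to Filippov's Lemma~3 of \cite{filippov-1}: assuming $D(I)=0$ forces $D(L)$ into the annihilator of $I$, which is an ideal annihilated by $I$ and hence zero by primeness. The $\mathbb{Z}_2$-graded bookkeeping you carry out is exactly the content of the paper's remark that ``exactly the same proof is valid in the case of superalgebras,'' so nothing further is needed.
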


\begin{proof}
This was proved in \cite[Lemma 3]{filippov-1} in the case of Lie algebras, and exactly 
the same proof is valid in the case of superalgebras.
\end{proof}

Recall that the \textit{standard identity of degree $5$} is the Lie identity of the
form
\begin{equation*}
\sum_{\sigma\in S_4}
(-1)^\sigma [[[[y, x_{\sigma(1)}], x_{\sigma(2)}], x_{\sigma(3)}], x_{\sigma(4)}] = 0
\end{equation*}
(summation is performed over all elements of the symmetric group $S_4$).
Its super analog (with appropriately injected signs) will be called the
\textit{standard superidentity of degree $5$}.
By $s_4(L)$ for a Lie (super)algebra $L$ we will understand the ideal of 
$L$ consisting of the linear spans of the left hand side of the standard
(super)identity of degree $5$ for all $x_1, x_2, x_3, x_4, y\in L$.

\begin{lemma}\label{filippov}
Let $L$ be a Lie algebra, $M$ an $L$-module, and $D$ a $\delta$-derivation
of $L$ with values in $M$, $\delta\ne 0,1,\frac{1}{2}$. Then $s_4(L) \subseteq Ker\,D$.
\end{lemma}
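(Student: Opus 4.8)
The plan is to extract from the $\delta$-derivation equation, for $\delta \neq 0, 1, \frac12$, enough identities among the values of $D$ to force it to annihilate the degree-$5$ standard ideal. First I would record the basic consequences of~\eqref{delta-der} (in its module form) that we will iterate: namely, writing the $\delta$-derivation condition for a triple bracket and reorganizing via the Jacobi identity, one obtains a relation expressing $D([[x,y],z])$ in several ways. The idea is to mimic the computation done in the proof of Corollary~\ref{corr}: there, the maps $\varphi$ satisfying $\varphi([x,y]) = \delta[x,\varphi(y)]$ were shown to satisfy $\varphi([L,L],L)=0$ purely from $\delta \neq 1$ by a short Jacobi-identity juggling. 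Here the situation is genuinely harder because $D$ is a full $\delta$-derivation (both terms on the right of~\eqref{delta-der} are present), not a one-sided object, so the cancellations are less immediate; the exclusion of $\delta = \frac12$ in addition to $\delta = 0,1$ is precisely what is needed to make the relevant linear combination of coefficients invertible.

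Concretely, I would proceed as follows. Step one: apply $D$ to $[[x,y],z]$ using~\eqref{delta-der} twice, obtaining an expression linear in $D(x), D(y), D(z)$ with coefficients that are iterated brackets; do this for each of the three cyclic arrangements and add, using $\sum_{\text{cyc}}[[x,y],z]=0$. Step two: this yields an identity of the schematic form $(\text{polynomial in }\delta)\cdot(\text{bracket of }D\text{-values}) = 0$; one checks the polynomial is nonzero exactly when $\delta \notin \{0,1,\tfrac12\}$, so the bracketed term — a ``Leibniz defect'' measuring how far $D$ is from respecting triple brackets — vanishes. Step three: iterate. Having controlled $D$ on triple brackets, feed the conclusion back into~\eqref{delta-der} to control $D$ on quadruple brackets, and then on the fully antisymmetrized quintuple bracket that defines $s_4$. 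Because the standard identity of degree $5$ is, up to the $S_4$-antisymmetrization, a specific iterated bracket $[[[[y,x_1],x_2],x_3],x_4]$, the accumulated relations should collapse $D$ of the antisymmetrized expression to $0$. Finally, since $s_4(L)$ is the ideal generated by these elements and $D(I)$-type control propagates along brackets with the help of the already-established relations, conclude $s_4(L) \subseteq \operatorname{Ker} D$.

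The main obstacle I anticipate is bookkeeping the antisymmetrization in degree $5$: one must verify that after summing over $S_4$ with signs, all the ``unwanted'' terms (those involving $D(x_i)$ with a nontrivial bracket coefficient) cancel, leaving only a scalar multiple — nonzero under the characteristic and $\delta$ hypotheses — of $D$ applied to the standard element. This is the step where Filippov's original ring-theoretic identity manipulations are doing real work, and where a careful choice of which intermediate identities to derive first (triple, then quadruple) is essential to keep the combinatorics manageable. A secondary point to be careful about: everything must be done for $D$ with values in an arbitrary module $M$, so one cannot use anticommutativity on the $D$-side or invoke a bracket of two $D$-values lying back in $L$; only the module axioms and~\eqref{delta-der} in its stated asymmetric form are available. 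I expect the proof to otherwise be a routine, if lengthy, verification once the right sequence of auxiliary identities is isolated.
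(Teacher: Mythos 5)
There is a genuine gap here, on two levels. First, what you have written is a plan rather than a proof: the entire computational core --- deriving the ``right sequence of auxiliary identities,'' verifying that the relevant coefficient polynomials in $\delta$ are nonzero precisely for $\delta\notin\{0,1,\frac12\}$, and checking that the $S_4$-antisymmetrization kills all unwanted terms --- is deferred, and that core is exactly the content of Filippov's Theorem~1 in \cite{filippov-2}, which is a genuinely intricate identity computation, not a routine verification. Second, the one step you do describe concretely already misfires: applying $D$ twice to $[[x,y],z]$ and summing over cyclic permutations yields (after collecting terms via the module axiom $[u,v]\bullet m = u\bullet(v\bullet m)-v\bullet(u\bullet m)$) the identity
$$
(\delta^2+\delta)\bigl([x,y]\bullet D(z)+[y,z]\bullet D(x)+[z,x]\bullet D(y)\bigr)=0 ,
$$
whose coefficient vanishes for $\delta=-1$, not for $\delta\in\{1,\frac12\}$. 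So the claim in your ``step two'' that the polynomial obtained from the cyclic sum is nonzero exactly when $\delta\notin\{0,1,\frac12\}$ is not what this computation gives, and the analogy with Corollary~\ref{corr} does not carry over, since there the map was one-sided ($\varphi([x,y])=\delta[x,\varphi(y)]$) and the cancellation pattern is entirely different.

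The paper sidesteps all of this with a short reduction that you missed: form the split extension $\mathscr L = L\oplus M$ in which $M$ is an abelian ideal with $[m,x]=x\bullet m$, and extend $D$ to $\mathscr D:\mathscr L\to\mathscr L$ by setting $\mathscr D(M)=0$. One checks immediately that $\mathscr D$ is a $\delta$-derivation of the Lie algebra $\mathscr L$ in the ordinary (adjoint-valued) sense, so Filippov's Theorem~1 applies as a black box and gives $s_4(\mathscr L)\subseteq Ker\,\mathscr D$; since $s_4(\mathscr L)=s_4(L)\oplus M'$ and $Ker\,\mathscr D = Ker\,D\oplus M$, the inclusion $s_4(L)\subseteq Ker\,D$ follows. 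This is the standard device for transporting an adjoint-valued result to arbitrary coefficients, and it is what makes the lemma a two-line corollary rather than a fresh computation. If you want to complete your approach instead, you would essentially have to reprove Filippov's theorem in the module setting, which is possible (his manipulations only ever use $D$ linearly) but is far more work than the statement requires.
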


\begin{proof}
Consider the semidirect sum $\mathscr L = L \oplus M$, with the Lie bracket defined in 
the usual way: $[m,x] = x \bullet m$ for $x \in L, m\in M$, and $[M,M] = 0$.
Extend $D$ to a linear map $\mathscr D: \mathscr L \to \mathscr L$ by putting 
$\mathscr D (M) = 0$. 
It is obvious that the so extended map is a $\delta$-derivation of $\mathscr L$.
By \cite[Theorem 1]{filippov-2}, $s_4(\mathscr L) \subseteq Ker\,\mathscr D$.
It is obvious that $s_4(\mathscr L) = s_4(L) \oplus M^\prime$ for some subspace
$M^\prime \subseteq M$. On the other hand, $Ker\,\mathscr D = Ker\,D \oplus M$,
and the desired statement follows.
\end{proof}

\begin{lemma}\label{solv}
A Lie algebra satisfying the standard identity of degree $5$ and 
having a nonzero $\delta$-derivation, $\delta \ne -1,0,\frac{1}{2},1$, is solvable.
\end{lemma}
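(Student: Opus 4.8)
The plan is to exploit Lemma \ref{filippov}, which tells us that a $\delta$-derivation with $\delta \ne 0,1,\tfrac12$ must annihilate the ideal $s_4(L)$. So let $L$ be a Lie algebra satisfying the standard identity of degree $5$ — equivalently $s_4(L) = 0$ — and let $D$ be a nonzero $\delta$-derivation with $\delta \ne -1,0,\tfrac12,1$. The idea is to reduce to the prime case and invoke Theorem FA. More precisely, I would argue by contradiction: suppose $L$ is not solvable. Then $L$ has a nonzero ideal $I$ which is not solvable, and among such one can try to pass to a prime quotient or a prime subquotient on which $D$ (or an induced map) is still a nonzero $\delta$-derivation; Theorem FA then forbids this since $\delta \ne -1,0,\tfrac12,1$.

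First I would set up the reduction carefully. The natural move is to consider the solvable radical $S$ of $L$ (note $L$ is not assumed finite-dimensional, so one should instead work with a maximal solvable ideal, or argue that it suffices to kill $D$ on every non-solvable subquotient). If $L/S$ were nonzero it would be semisimple-like; but without finite-dimensionality the cleanest route is: assume $L$ is a counterexample of the form where $D \ne 0$ but $L$ is not solvable, and produce a nonzero ideal $P$ such that $L/P$ is prime and non-trivial, with a nonzero induced $\delta$-derivation. By Lemma \ref{ideal} (in the algebra case, i.e. Filippov's \cite[Lemma 3]{filippov-1}), $D$ does not vanish on any nonzero ideal, so $D$ induces nonzero $\delta$-derivations on suitable quotients. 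One then needs that $D$ descends: $D$ need not preserve an arbitrary ideal, so the genuinely careful point is to choose $P$ to be $D$-invariant, or to replace $D$ by a related map. This is where I expect the main obstacle to lie — arranging that the reduction to a prime algebra carries along an honest nonzero $\delta$-derivation.

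A cleaner alternative, which I suspect is what the authors intend, is the following. Since $s_4(L) = 0$, the algebra $L$ is, by the Poincaré–Birkhoff–Witt / Amitsur–Levitzki circle of ideas, "almost" a subalgebra of $2\times 2$ matrices; more concretely, a Lie algebra satisfying the standard identity of degree $5$ has the property that its quotient by the solvable radical embeds into (a direct product of copies of) $sl_2$ or $gl_2$ over extension fields. So the semisimple part of $L$ is built from forms of $sl(2)$. Now a form of $sl(2)$ is a central simple Lie algebra of rank $1$, and by Theorem FC it has nonzero $(-1)$-derivations but, by Theorem FA, no nonzero $\delta$-derivations for $\delta \ne -1,0,\tfrac12,1$. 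Since our $\delta$ is excluded from exactly this list, $D$ must vanish on the semisimple part; combined with the fact (Lemma \ref{ideal}) that $D$ cannot vanish on a nonzero ideal, the semisimple part must be zero, i.e. $L$ is solvable. I would flesh out the structural claim about $\mathrm{std}_5$-algebras (citing the relevant place in the literature on PI Lie algebras — e.g. that a simple Lie algebra satisfying $s_4$ is a form of $sl_2$), then assemble these pieces.

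The main obstacle, then, is the structure-theoretic input: one must know that a Lie algebra with $s_4(L) = 0$ has solvable radical with "rank $1$" semisimple quotient, uniformly in the (possibly infinite-dimensional, arbitrary characteristic $\ne 2,3$) setting, and that a nonzero $\delta$-derivation would survive to that quotient. If the intended proof instead proceeds purely ring-theoretically à la Filippov, the analogous obstacle is the bookkeeping of passing to prime quotients while keeping a nonzero $\delta$-derivation; either way, once the reduction to "prime, rank $1$" or "form of $sl(2)$" is in place, Theorems FA and FC finish it immediately. The remaining steps — checking that the induced map really satisfies (\ref{delta-der}) on the quotient, and that $\delta \ne -1,0,\tfrac12,1$ is exactly the hypothesis needed — are routine.
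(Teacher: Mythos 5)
Your proposal takes a genuinely different route from the paper, and the route does not close. The paper's proof is not a structural reduction at all: it is the observation that Filippov's proof of Theorem FA in \cite{filippov-2} uses primeness in exactly two places --- once to \emph{derive} the standard identity of degree $5$ (here taken as a hypothesis instead), and once at the very end to turn the derived solvability into a contradiction (here taken as the conclusion instead) --- so that the intervening chain of identity manipulations proves the Lemma verbatim. The real content is Filippov's computation, imported wholesale; there is no passage to quotients, no radical, and no appeal to Theorems FA/FC as black boxes.

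The gaps in your reduction are concrete. First, Lemma \ref{ideal} is stated (and is only true) for \emph{prime} algebras; you cannot invoke it for a general $L$ with $s_4(L)=0$. In fact, for non-prime $L$ the hypothesis ``$D\ne 0$'' gives no leverage on quotients: any linear map vanishing on $[L,L]$ with image in $Z(L)$ is a nonzero $\delta$-derivation for \emph{every} $\delta$, so a nonzero $D$ may be entirely supported ``inside the radical'' and induce the zero map on any semisimple or prime quotient. Consequently the step ``Theorem FA forbids a nonzero $\delta$-derivation on the prime quotient, hence the quotient is zero'' is a non sequitur --- FA would only force the \emph{induced derivation} to vanish, not the quotient. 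Second, $\delta$-derivations need not preserve ideals, so $D$ need not descend at all; you flag this yourself but do not resolve it. Third, the structural claim that a Lie algebra with $s_4(L)=0$ has semisimple quotient built from forms of $sl(2)$ is asserted without proof, and in the generality required here (possibly infinite-dimensional, characteristic $p>3$) it is a substantial theorem rather than a citation; moreover, forms of $sl(2)$ themselves satisfy $s_4=0$, so the identity hypothesis does not exclude them --- only the exclusion $\delta\ne -1$ does, and only through $D$, which by the first point may not survive to the quotient. Any proof that uses the existence of a nonzero $\delta$-derivation only ``softly'' (through quotients) is therefore doomed; one must track the specific identities, which is exactly what Filippov's argument, reused by the paper, does.
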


\begin{proof}
In the proof of Theorem FA in \cite{filippov-2}, the condition of primeness of an
algebra is used only twice: first to make use of \cite[Lemma 3]{filippov-2} claiming
that a prime Lie algebra having a nonzero $\delta$-derivation, 
$\delta \ne 0,\frac{1}{2},1$, satisfies the standard identity of degree $5$ and a certain
its consequence, and then at the very end, to obtain contradiction with solvability.
Consequently, that proof, almost verbatim, will serve as the proof of the Lemma.
\end{proof}

\begin{lemma}\label{symm}
The space of $\frac{1}{2}$-derivations of a perfect centerless Lie algebra having a 
nondegenerate symmetric invariant bilinear form, coincides with the centroid of the 
algebra.
\end{lemma}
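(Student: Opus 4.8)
\medskip

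The plan is to prove the two inclusions separately. That $\Gamma(L)\subseteq Der_{\frac12}(L)$ is immediate: if $\chi$ belongs to the centroid, then $\chi([x,y])=[\chi(x),y]=\tfrac12[\chi(x),y]+\tfrac12[x,\chi(y)]$. Conversely, fix $D\in Der_{\frac12}(L)$. Since $L$ is perfect, its centroid consists precisely of the linear maps $\chi$ with $[\chi(x),y]=[x,\chi(y)]$ for all $x,y\in L$, so it suffices to show that the bilinear map $S(x,y):=[D(x),y]-[x,D(y)]$ vanishes identically. A direct check shows $S$ is symmetric, and that for each fixed $x$ the map $S(x,\cdot)=2[D,\mathrm{ad}_x]$ is again a $\tfrac12$-derivation of $L$ (by $[Der_\delta(L),Der_{\delta'}(L)]\subseteq Der_{\delta\delta'}(L)$, recalled in \S\ref{sect-zass}).

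Next I would pass to the form. Invariance and symmetry of $\form$ give the identity $(S(x,y),z)=(D(x),[y,z])-(D(y),[z,x])$, so, by nondegeneracy, $S\equiv 0$ is equivalent to the trilinear form $\tau(x,y,z):=(D(x),[y,z])$ being invariant under cyclic permutations; since $\tau$ is in any case alternating in its last two arguments, this is the same as $\tau$ being totally alternating. Independently, applying $D$ to the Jacobi identity, expanding each summand via \eqref{delta-der}, and reorganizing with one further use of the Jacobi identity yields the structural relation $\sum_{\mathrm{cyc}}[D(x),[y,z]]=0$, or equivalently $\sum_{\mathrm{cyc}}S(x,[y,z])=0$.

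The remaining task — to upgrade this relation to the total alternation of $\tau$ — is the heart of the matter, and is exactly where perfectness, centerlessness, and the nondegeneracy of the form are genuinely used; purely formal manipulation of \eqref{delta-der} and the form merely reproduces equivalent restatements of the goal (thus, e.g., ``$\tau$ is totally alternating'', ``the symmetric bilinear form $(D(\cdot),\cdot)+(\cdot,D(\cdot))$ is invariant'', and ``the adjoint of $D$ relative to $\form$ is itself a $\tfrac12$-derivation'' are all mutually equivalent). I would carry this step out by following Filippov's proof of Theorem FB (\cite[Corollary 3]{filippov-1} together with the auxiliary identities preceding it), checking that primeness is invoked there only in order to deduce the vanishing of an auxiliary linear map $\varphi$, constructed from $D$ and the form, from the vanishing of $[\varphi(L),L]$ — a deduction supplied here by centerlessness, since $[\varphi(L),L]=0$ forces $\varphi(L)\subseteq Z(L)=0$ — while centrality is used only to identify a resulting commuting family of operators with the field of scalars; dropping centrality, one is left with the full centroid $\Gamma(L)$, perfectness guaranteeing that this identification is the right one. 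Hence $S\equiv 0$ and $D\in\Gamma(L)$.

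I expect this last step to be the sole real obstacle: pinpointing the places in Filippov's argument that use ``prime'' and ``central'' and verifying that ``centerless'' and ``perfect'' do the job in their stead. Everything preceding it is bookkeeping. (One must respect the standing hypothesis $\mathrm{char}\,K\neq 2,3$; already the coefficients $\tfrac14$ produced when \eqref{delta-der} is iterated require $\mathrm{char}\,K\neq 2$.)
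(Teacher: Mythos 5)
Your proposal is correct and follows essentially the same route as the paper: the paper's proof likewise consists of tracing through Filippov's argument (\cite[Theorem 6]{filippov-1}) and observing that primeness is invoked only at the very end, to deduce that $\{x\in L : [[L,L],x]=0\}$ vanishes --- which perfectness plus centerlessness supply directly --- while centrality (absent here) only served to cut the resulting centroid down to $K$. The preliminary reductions you spell out (the map $S$, the trilinear form $\tau$) are part of Filippov's machinery rather than new content, so the two proofs coincide in substance.
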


\begin{proof}
In the proof of \cite[Theorem 6]{filippov-1}, the condition of primeness of an 
algebra $L$ is used only at the very end, to deduce that 
$$
\set{x\in L}{[[L,L],x] = 0} = 0 .
$$
But for a perfect and centerless Lie algebra, the latter condition will be 
trivially satisfied as well.
\end{proof}

All this together allows to establish super analogs of Theorems FA and FB:

\begin{theorem}\label{prime}
A prime Lie superalgebra does not have nonzero $\delta$-de\-ri\-va\-tions and 
nonzero $\delta$-su\-per\-de\-ri\-va\-tions if 
$\delta \ne -1, 0, \frac{1}{2}, 1$.
\end{theorem}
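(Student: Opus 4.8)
The plan is to reduce the superalgebra statement to the already-established algebra statement (Theorem FA) by passing to the Grassmann envelope. Let $L = L_0 \oplus L_1$ be a prime Lie superalgebra, and suppose $D$ is a nonzero $\delta$-derivation or $\delta$-superderivation of $L$ with $\delta \ne -1, 0, \frac{1}{2}, 1$. Since $\delta$-superderivations are in particular $\delta$-derivations (homogeneous ones), it suffices to treat the $\delta$-derivation case. First I would recall that $G(L) = (L \otimes G)_0$ is a Lie algebra, and use Lemma~\ref{der}(i): the map $\widehat D$ extending $D$ to $G(L)$ (acting as $D \otimes \mathrm{id}$ on each homogeneous layer) is a $\delta$-derivation of $G(L)$ with values in $L \otimes G$. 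To get an honest $\delta$-derivation of $G(L)$ into itself, I would instead invoke Lemma~\ref{der}(ii) with an appropriately chosen homogeneous element $\chi$ of the supercentroid of $G$ of the same parity as $D$ — for instance $\chi = \mathrm{id}$ if $\deg D = 0$, or multiplication by a fixed odd Grassmann generator if $\deg D = 1$ — so that $\widehat D: G(L) \to G(L)$ is a genuine $\delta$-derivation. One checks $\widehat D \ne 0$ because $D \ne 0$ and the chosen $\chi$ is injective on the relevant layer.

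Next I would verify that $G(L)$ is a prime Lie algebra whenever $L$ is a prime Lie superalgebra. This is the standard fact that primeness passes to the Grassmann envelope: ideals of $G(L)$ correspond to graded ideals of $L \otimes G$, which in turn are controlled by (graded) ideals of $L$, and the product of two nonzero such ideals is nonzero precisely because the corresponding product in $L$ is nonzero and $G$ has no zero divisors in the relevant sense for the envelope construction. I would cite this as well-known (it is the same mechanism by which identities transfer between $L$ and $G(L)$, already invoked in the discussion preceding Lemma~\ref{der}), or give a short direct argument tracking ideals through the envelope.

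Having established that $G(L)$ is a prime Lie algebra carrying a nonzero $\delta$-derivation $\widehat D$ with $\delta \ne -1, 0, \frac{1}{2}, 1$, Theorem~FA applies and yields a contradiction. Hence no such nonzero $D$ can exist on $L$, which proves the theorem for both $\delta$-derivations and $\delta$-superderivations. The main obstacle is the transfer of primeness: one must be careful that "prime" for the superalgebra (product of nonzero \emph{graded} ideals nonzero, or product of nonzero ideals nonzero in the super sense) is exactly what is needed to conclude primeness of $G(L)$ in the ordinary sense, and that the correspondence between ideals of $G(L)$ and ideals of $L$ is tight enough — in particular that a nonzero ideal of $G(L)$ forces a nonzero graded ideal of $L$ to appear, so that the product of two such cannot collapse. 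Everything else (the two lemmas on $\widehat D$, and the final appeal to Theorem~FA) is bookkeeping, provided one keeps the three flavors — ordinary, $\mathbb{Z}_2$-graded, and super — carefully distinguished, as the paper warns.
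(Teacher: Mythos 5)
Your proposal has a genuine gap at its central step: the claim that primeness passes to the Grassmann envelope is false. In fact $G(L)$ is never prime (not even semiprime) for a nonzero superalgebra $L$. Take a single odd generator $\xi_1$ of $G$ and set $J = (L \otimes \xi_1 G) \cap G(L)$; this is a nonzero ideal of the Lie algebra $G(L)$ (it contains, e.g., $L_0 \otimes \xi_1\xi_2$), yet $[J,J] \subseteq [L,L] \otimes \xi_1 G\,\xi_1 G = 0$ because $\xi_1^2 = 0$. So $[J,J]=0$ and Theorem FA simply cannot be applied to $G(L)$. The transfer of \emph{identities} between $L$ and $G(L)$ that the paper invokes is a different (and much weaker) phenomenon than a transfer of primeness; the nilpotency built into the Grassmann algebra destroys any semiprimeness of the envelope. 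A secondary issue: an \emph{odd} $\delta$-superderivation is not a $\delta$-derivation (the sign $(-1)^{\deg(D)\deg(a)}$ genuinely changes the identity), so the opening reduction ``it suffices to treat the $\delta$-derivation case'' does not work either, although your later appeal to Lemma~\ref{der}(ii) with multiplication by an odd Grassmann element is the right fix for that particular point.

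This is precisely why the paper does not argue the way you propose. Instead of trying to make $G(L)$ prime, it dissects Filippov's proof of Theorem FA into pieces that require no primeness of the ambient algebra: Lemma~\ref{filippov} (for a $\delta$-derivation with values in a module, $s_4 \subseteq \mathrm{Ker}$, proved via a semidirect sum) gives $s_4(G(L)) \subseteq \mathrm{Ker}(\widehat D)$, which descends to $s_4(L) \subseteq \mathrm{Ker}\,D$; primeness is then used only at the level of the superalgebra $L$ itself, via Lemma~\ref{ideal}, to force $s_4(L)=0$; finally Lemma~\ref{solv} (again primeness-free) shows $G(L)$, hence $L$, is solvable, contradicting primeness of $L$. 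If you want to salvage your outline, you must replace the appeal to Theorem FA on $G(L)$ by this kind of decomposition, keeping all uses of primeness on the superalgebra side.
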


\begin{proof}
Consider first the case of $\delta$-derivations.
Let $D$ be a nonzero $\delta$-derivation of a prime Lie superalgebra $L$.
By Lemma \ref{der}(i), $\widehat D$ is a nonzero $\delta$-derivation of $G(L)$
with values in $L\otimes G$. By Lemma \ref{filippov}, 
\begin{equation}\label{yoyo}
s_4(G(L)) \subseteq Ker(\widehat D) ,
\end{equation} 
Obviously, 
\begin{equation}\label{yoyo1}
s_4(G(L)) = G(s_4(L)) .
\end{equation}
On the other hand, 
\begin{equation}\label{yoyo2}
Ker(\widehat D) = (Ker(D|_{L_0}) \otimes G_0) \oplus (Ker(D|_{L_1}) \otimes G_1) .
\end{equation}
It follows from (\ref{yoyo}), (\ref{yoyo1}) and (\ref{yoyo2}) that
$$
s_4(L) = s_4(L)_0 \oplus s_4(L)_1 \subseteq Ker(D|_{L_0}) \oplus Ker(D|_{L_1}) 
\subseteq Ker\,D ,
$$
where $s_4(L) = s_4(L)_0 \oplus s_4(L)_1$ is decomposition of the Lie superalgebra
$s_4(L)$ into the even and odd parts.
By Lemma \ref{ideal}, $s_4(L) = 0$, in other words, $L$ satisfies the standard
superidentity of degree $5$, and $G(L)$ satisfies the standard identity
of degree $5$. By Lemma \ref{solv}, $G(L)$ is solvable and hence $L$ is solvable, 
a contradiction.

Now let $D$ be a $\delta$-superderivation of $L$. Take as $\chi$ the left multiplication 
by any homogeneous element of $G$ whose parity coincides with the parity of $D$ and
apply Lemma \ref{der}(ii). $\widehat D$ is a nonzero $\delta$-derivation of $G(L)$, and
the rest of reasoning is the same as in the case of $\delta$-derivations.
\end{proof}

\begin{theorem}\label{onetwo}
The space of $\frac{1}{2}$-derivations (respectively, $\frac{1}{2}$-superderivations)
of a perfect centerless Lie superalgebra having a nondegenerate supersymmetric 
invariant bilinear form, coincides with the centroid (respectively, supercentroid)
of the superalgebra.
\end{theorem}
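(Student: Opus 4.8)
The plan is to mimic, in the super/Grassmann-envelope setting, exactly the pattern already used to prove Theorem \ref{prime}: transfer the problem to the ordinary Lie algebra $G(L)$, apply the corresponding non-super result (here Lemma \ref{symm} in place of Lemma \ref{solv}), and then translate the conclusion back to $L$. So first I would treat $\frac{1}{2}$-derivations. Let $D$ be a $\frac{1}{2}$-derivation of the perfect centerless Lie superalgebra $L$ carrying a nondegenerate supersymmetric invariant bilinear form $\form$. By Lemma \ref{der}(i), $\widehat D$ is a $\frac{1}{2}$-derivation of $G(L)$ with values in $L\otimes G$; but since $\delta=\frac{1}{2}$ is the ``centroid'' value and $\chi=R_1$ (the identity) is in the supercentroid of $G$ with $\deg(\chi)=0=\deg(D)$ once we reduce to the homogeneous pieces, Lemma \ref{der}(ii) actually lets us land inside $(L\otimes G)_0=G(L)$ itself, so $\widehat D\in Der_{\frac{1}{2}}(G(L))$ proper.

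Next I would check that $G(L)$ satisfies the hypotheses of Lemma \ref{symm}. Perfectness transfers because $G([L,L])=[G(L),G(L)]$ (the Grassmann envelope respects the commutant, a special case of the identity-transfer principle quoted in \S\ref{super}), so $[L,L]=L$ gives $[G(L),G(L)]=G(L)$. Centerlessness transfers similarly: a central element of $G(L)$ would, by homogeneity in $G$, force a central homogeneous element of $L$. And by Lemma \ref{yoyod}, the nonzero linear form $f$ (take, say, the Berezin-type projection onto the top component, or any nonzero $f$) produces a nondegenerate symmetric invariant bilinear form on $G(L)$. Hence Lemma \ref{symm} applies and $Der_{\frac{1}{2}}(G(L))$ equals the centroid of $G(L)$, so $\widehat D$ is in that centroid.

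The last and most delicate step is to descend: from ``$\widehat D$ lies in the centroid of $G(L)$'' I must recover ``$D$ lies in the supercentroid of $L$''. Writing $\widehat D$ on homogeneous tensors and using that an element of the centroid of $G(L)$ commutes with all left/right multiplications $L_{x\otimes g}$, I would specialize $x$ and $g$ to homogeneous generators and read off, component by component, that $D(L_i)\subseteq L_i$ and that $D([x,y]) = [D(x),y] = (-1)^{\deg(D)\deg(x)}[x,D(y)]$ for homogeneous $x,y$ — i.e. exactly the supercentroid condition. The only real subtlety is bookkeeping of the Koszul signs coming from transposing odd elements of $G$ past odd elements of $L$, but these are precisely the signs that make the Grassmann envelope work in the first place, so the identity-transfer principle of \S\ref{super} guarantees consistency; I would state it as a routine verification. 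Finally, the $\frac{1}{2}$-superderivation case is identical: a $\frac{1}{2}$-superderivation is a homogeneous $\frac{1}{2}$-derivation, so applying Lemma \ref{der}(ii) with $\chi=R_g$ for a homogeneous $g\in G$ of matching parity lands $\widehat D$ again in the centroid of $G(L)$, and the same descent yields membership in the supercentroid of $L$. The main obstacle is purely the sign-tracking in the descent step; everything else is a recycling of the machinery already assembled.

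\begin{proof}
Consider first the case of $\frac{1}{2}$-derivations. Let $D$ be a $\frac{1}{2}$-derivation
of a perfect centerless Lie superalgebra $L$ carrying a nondegenerate supersymmetric
invariant bilinear form $\form$. Applying Lemma \ref{der}(ii) with $\chi = R_1$ (which
lies in the supercentroid of $G$ and has even parity, matching the even part of $D$),
together with Lemma \ref{der}(i) on the odd part, we obtain a $\frac{1}{2}$-derivation
$\widehat D$ of $G(L)$. Since the Grassmann envelope respects the commutant,
$[G(L),G(L)] = G([L,L]) = G(L)$, so $G(L)$ is perfect; likewise a central element of
$G(L)$ would, by $G$-homogeneity, yield a central homogeneous element of $L$, so $G(L)$
is centerless. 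By Lemma \ref{yoyod}, $G(L)$ carries a nondegenerate symmetric invariant
bilinear form. Hence Lemma \ref{symm} applies and $\widehat D$ lies in the centroid of
$G(L)$. Writing $\widehat D$ on homogeneous tensors $x \otimes g$ and using that it
commutes with all left and right multiplications in $G(L)$, we specialize $g$ to
homogeneous generators of $G$ and read off, component by component, that
$D(L_i) \subseteq L_i$ and $D([x,y]) = [D(x),y] = (-1)^{\deg(D)\deg(x)}[x,D(y)]$ for
homogeneous $x,y \in L$; the signs incurred in transposing odd elements of $G$ past odd
elements of $L$ are exactly those built into the Grassmann envelope, so this is a routine
verification. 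Thus $D$ belongs to the centroid of $L$. Conversely every element of the
centroid of $L$ is plainly a $\frac{1}{2}$-derivation.

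Now let $D$ be a $\frac{1}{2}$-superderivation of $L$. As noted in \S\ref{super}, this
is the same as a homogeneous $\frac{1}{2}$-derivation. Take $\chi = R_g$ for a homogeneous
$g \in G$ whose parity equals $\deg(D)$ and apply Lemma \ref{der}(ii); then $\widehat D$
is a $\frac{1}{2}$-derivation of $G(L)$, and the argument above shows it lies in the
centroid of $G(L)$. Descending as before, and tracking the parity of $\chi$, we obtain
that $D$ satisfies the supercentroid condition
$D(ab) = D(a)b = (-1)^{\deg(D)\deg(a)} a D(b)$ for homogeneous $a,b \in L$, i.e. $D$
belongs to the supercentroid of $L$. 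The reverse inclusion is again immediate.
\end{proof}
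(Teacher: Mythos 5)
Your overall strategy (transfer to $G(L)$, apply Lemma \ref{symm}, descend) is the right one and is indeed what the paper does, but your very first reduction step has a genuine gap. A $\frac{1}{2}$-derivation $D$ of the superalgebra $L$ need not preserve the $\mathbb Z_2$-grading, and even its odd homogeneous component, while still a $\frac{1}{2}$-derivation, is \emph{not} a $\frac{1}{2}$-superderivation (the Koszul sign is missing). Consequently Lemma \ref{der}(ii) is not applicable to it, and your claim that ``Lemma \ref{der}(i) on the odd part'' still lands $\widehat D$ inside $G(L)$ is false: Lemma \ref{der}(i) only produces a map $G(L)\to L\otimes G$, and for the odd component of $D$ the image genuinely sits in $(L\otimes G)_1$, outside $G(L)$. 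One can check directly that the naive fix $x\otimes g\mapsto D_1(x)\otimes\theta g$ with $\theta\in G_1$ fails the $\frac{1}{2}$-derivation identity because $g\theta=-\theta g$ for odd $g$ introduces a sign that the non-super identity for $D_1$ does not absorb. So you cannot apply Lemma \ref{symm} to $G(L)$ directly; you only have a $\frac{1}{2}$-derivation of $G(L)$ \emph{with values in the module} $L\otimes G$, and Lemma \ref{symm} says nothing about those. A related symptom of the same conflation is your descent conclusion $D(L_i)\subseteq L_i$: this cannot be derived (and is not claimed by the theorem) for a general, non-homogeneous $\frac{1}{2}$-derivation, whose target is the centroid, not the supercentroid.

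The paper closes exactly this gap with the semidirect-sum trick already used in Lemma \ref{filippov}: form $\mathscr L = G(L)\oplus(L\otimes G)$ with $L\otimes G$ abelian, extend $\widehat D$ by zero on $L\otimes G$, extend the bilinear form of Lemma \ref{yoyod} to $\mathscr L$ (pairing $G(L)$ with $L\otimes G$ via $(x\otimes g,x'\otimes g')=(x,x')f(gg')$ and setting it to zero on $L\otimes G$), check that $\mathscr L$ is perfect and centerless, and only then apply Lemma \ref{symm} --- to $\mathscr L$, not to $G(L)$. The resulting centroid element of $\mathscr L$ then forces $D$ into the centroid of $L$. Your treatment of the superderivation case is essentially correct and matches the paper (there $D$ is homogeneous by definition, Lemma \ref{der}(ii) with $\chi=R_g$ applies, and one lands in $G(L)$ proper), but the first half of your proof needs the module/semidirect-sum detour or some equivalent device to be valid.
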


\begin{proof}
Again, consider first the case of $\frac{1}{2}$-derivations.
Let $L$ be such a superalgebra, and $D$ is its $\frac{1}{2}$-derivation.
By Lemma \ref{der}(i), $\widehat D$ is an $\frac{1}{2}$-derivation of $G(L)$
with values in $L \otimes G$.
We employ the same elementary trick as in the proof of Lemma \ref{filippov} and consider
the semidirect sum $\mathscr L = G(L) \oplus (L \otimes G)$, where the Lie 
bracket between $G(L)$ and $L\otimes G$ is determined by action of the former on the 
latter, and the space $L\otimes G$ is considered as an abelian subalgebra.
It is obvious that this semidirect sum is perfect and centerless.
We can also furnish it with the symmetric bilinear form: on $G(L)$ it is defined
as in Lemma \ref{yoyod}, between elements of $G(L)$ and $L\otimes G$ it is defined, 
essentially, in the same way: 
$$
(x\otimes g, x^\prime \otimes g^\prime) = (x,x^\prime) f(gg^\prime)
$$
for homogeneous elements $x,x^\prime\in L$, $g,g^\prime \in G$, and on $L\otimes G$
the form vanishes. The same elementary calculation that verifies the validity of
Lemma \ref{yoyod}, verifies that this form is invariant and nondegenerate.

We also extend $\widehat D$ from $L(G)$ to $\mathscr L$ as in the proof of Lemma 
\ref{filippov}, and apply to the resulting $\frac{1}{2}$-derivation of $\mathscr L$
Lemma \ref{symm} to get:
$$
D(x_0) \otimes g_0 + D(x_1) \otimes g_1 = \widehat\chi (x_0 \otimes g_0 + x_1 \otimes g_1)
$$
for any $x_0\in L_0$, $x_1\in L_1$, $g_0\in G_0$, $g_1\in G_1$, and for a certain 
$\widehat\chi$ belonging to the centroid of $\mathscr L$.
This immediately implies that $D$ belongs to the centroid of $L$.

The case of $\frac{1}{2}$-superderivations is similar, but simpler: we appeal to 
Lemma \ref{der}(ii) (where $\chi$ is taken, as in the respective part of the proof of
Theorem \ref{prime}, as the left multiplication by a homogeneous element $g\in G$ of the appropriate 
parity), directly to Lemma \ref{yoyod}, and to 
Lemma \ref{symm}, to deduce that for every $\frac{1}{2}$-superderivation $D$ of $L$,
$$
D(x_0) \otimes gg_0 + D(x_1) \otimes gg_1 = 
\widehat\chi (x_0 \otimes g_0 + x_1 \otimes g_1)
$$
holds for any $x_0\in L_0$, $x_1\in L_1$, $g_0\in G_0$, $g_1\in G_1$, and 
for a certain $\widehat\chi$ belonging to the centroid of $G(L)$.
This immediately implies that $D$ belongs to the supercentroid of $L$.
\end{proof}

Theorem \ref{prime} (together with its proof) and Theorem \ref{onetwo} allow to 
streamline most of the proofs from \cite{kayg} of the absence of nontrivial 
$\delta$-derivations of classical Lie superalgebras. For example, 
when dealing with $(-1)$-derivations, the condition that
a Lie superalgebra satisfies the standard superidentity of degree $5$ allows to exclude
most of the cases, and when dealing with $\frac{1}{2}$-derivations, Theorem \ref{onetwo}
allows to settle immediately the cases of superalgebras 
$A(m,n)$ with $m \ne n$, $B(m,n)$, $C(n)$,
$D(m,n)$ with $m - n \ne 1$, $F(4)$ and $G(3)$, as follows from \cite[\S 2.3.4]{kac}.

Note that a similar approach can be used to obtain a super analog of the Filippov's result
about absence of nonzero $\delta$-derivations of prime associative algebras
for $\delta \ne 0,\frac{1}{2},1$ (\cite[Theorem 1]{filippov-ass}). 
In that case, instead of taking the Grassmann envelope,
we may take a similar tensor product construction with any centrally closed associative
superalgebra (for example, a free associative superalgebra). Then one may extend in the
straightforward way notions and results from \cite{emo} pertaining generalized 
(Martindale) centroid and primeness of the tensor product of prime algebras to the
super case, and utilize Lemma \ref{der} to reduce the super case to the ordinary one.

\section{Further questions}

\subsection{}\label{skryabin}

The isomorphisms (\ref{zass1}) in \S \ref{sect-zass}, as well as Proposition \ref{prop}, suggest that 
$\delta$-derivations of a Lie algebra of derivations of a commutative algebra are strongly
related to the underlying commutative algebra.
In this connection, it would be interesting to try to obtain $\delta$-derivational 
counterparts of the very general results \cite{skryabin} of Skryabin about the first
cohomology of Lie algebras of derivations of commutative rings.

\subsection{}

It was suggested by Dimitry Leites and his collaborators (see, for example, 
\cite[\S 1]{bg-leites} and \cite{leites} and references therein),
that in some circumstances the cohomology theory of modular Lie (super)algebras,
especially in characteristic $2$, is inadequate to describe the structural phenomena,
and they posed a question to devise a more ``ideologically correct'' cohomology.
Leites suggested further that $\delta$-derivations may have something to do with that.
Modular Lie superalgebras or not, it appears to be interesting to devise a cohomology 
theory which will encompass $\delta$-derivations.
All ``naive'' attempts to do that seemingly fail.

\subsection{}

Let $D$ be a $\delta$-derivation of a Lie algebra $L$. Suppose that one of the following 
holds: 
$D$ is nilpotent with index of nilpotency less than the (positive) characteristic of the 
ground field;
$D$ is nilpotent and the ground field is of characteristic $0$;
$L$ is finite-dimensional and the ground field is $\mathbb R$ or $\mathbb C$.
Then 
$$
\exp(\delta D) = 1 + \delta D + \frac{\delta^2 D^2}{2!} + \frac{\delta^3 D^3}{3!} + \dots: L \to L
$$ 
is a well-defined linear map, and
$$
\exp(D)([x,y]) = [\exp(\delta D)(x), \exp(\delta D)(y)]
$$
holds for any $x,y\in L$. This is proved exactly in the same way as in the classical
case of derivations for $\delta=1$ (see, for example, \cite[Chapter 1, \S 2]{jacobson}).

More generally, if $D$ is a quasiderivation of $L$ such that one of the assumptions
above holds both for $D$ and $F(D)$, then
$$
\exp(F(D))([x,y]) = [\exp(D)(x), \exp(D)(y)] 
$$ 
holds for any $x,y\in L$.

This suggests the following definition. Let us call a bijective linear map
$\varphi: L \to L$ a \textit{quasiautomorphism of $L$} if there exists a bijective 
linear map $\psi: L \to L$ such that 
$$
\psi([x,y]) = [\varphi(x),\varphi(y)]
$$ 
holds for any $x,y\in L$. Obviously, the set of all quasiautomorphisms of a Lie algebra
forms a group. To our knowledge, this notion was not studied yet.
May be it is interesting to undertake such a study.

\section*{Acknowledgement}

Thanks are due to late Hans Zassenhaus for supplying me, long time ago, 
with a reprint of his paper \cite{zass}. Though nowadays, with all these electronic 
accesses, this does not seem to be a big deal, it was such at 1989, 
when I was a student in a remote part of the former Soviet Union.
The letter from him containing the requested reprint with warm words of encouragement
would always be remembered with gratitude and nostalgia.

\section*{Appendix}

Here we describe a simple-minded GAP program, available as \newline
\texttt{http://justpasha.org/math/delta-der.gap}\footnote[2]{
Currently available as an ancillary file in the arXiv version.
}, 
for computation of $\delta$-derivations of a Lie (and, more general, anticommutative) algebra.
It was used to verify various claims made in this paper.

Let $A$ be an anticommutative algebra with basis $\{e_1, \dots, e_n\}$ defined
over a field $K$, and with multiplication table 
$e_ie_j = \sum_{k=1}^n C_{ij}^k e_k$, and let $\delta\in K$ and $D$ be a
$\delta$-derivation of $A$. Writing $D(e_i) = \sum_{j=1}^n d_{ij}e_j$,
for certain $d_{ij} \in K$,
the condition (\ref{delta-der}), written
for each pair of basic elements, is equivalent to the system of 
$\frac{n^2(n-1)}{2}$ linear homogeneous equations in $d_{ij}$:
\begin{equation}\label{sys}
\sum_{k=1}^n C_{ij}^k d_{kl} - \delta \sum_{k=1}^n C_{kj}^l d_{ik} 
                             + \delta \sum_{k=1}^n C_{ki}^l d_{jk} 
= 0
\end{equation}
for $1 \le i < j \le n$, $1 \le l \le n$.
So we just solve this system using the standard library routine.

When dealing with the $16$-dimensional Lie algebra from 
\cite{elduque-1} (as mentioned in \S \ref{semigr}), we want to compute $\delta$-derivations
for \textit{all} $\delta$, i.e., to solve a linear system with parameter.
As GAP (version 4.4.12 as of time of this writing) does not support transcendental
field extensions -- what would be a natural way to work with parameters --
we are cheating by using cyclotomic fields instead.
However, this cheating could be made rigorous by picking a cyclotomic extension of prime
degree (of course, any other field extension by an irreducible polynomial will do) larger 
than the highest possible power of a parameter involved in computation.
For example, if we deal with a $16$-dimensional algebra, the system (\ref{sys}) will be
of size $1920 \times 256$. As all coefficients of that system depend on the 
parameter $\delta$ linearly, any power of $\delta$ occurring in its solution will not 
exceed $256$, so the cyclotomic extension of degree $257$ would be enough.

\end{document}